\documentclass[12pt]{article}
\usepackage{amsmath, amscd, amssymb, latexsym, epsfig, color, amsthm, mathtools, tikz-cd, array, adjustbox, bbm, tikz}
\setlength{\textwidth}{6.35in}
\setlength{\textheight}{8.5in}
\setlength{\topmargin}{0pt}
\setlength{\headsep}{0pt}
\setlength{\headheight}{0pt}
\setlength{\oddsidemargin}{0pt}
\setlength{\evensidemargin}{0pt}
\flushbottom
\pagestyle{plain}

\numberwithin{equation}{section}

\newtheorem{theorem}{Theorem}[section]
\newtheorem{proposition}[theorem]{Proposition}
\newtheorem{question}[theorem]{Question}
\newtheorem{corollary}[theorem]{Corollary}
\newtheorem{conjecture}[theorem]{Conjecture}

\theoremstyle{definition}
\newtheorem{construction}[theorem]{Construction}

\newtheorem{example}[theorem]{Example}
\newtheorem{remark}[theorem]{Remark}

\DeclareMathOperator\lk{\mathrm{lk}}

\DeclareMathOperator\cost{\mathrm{cost}}
\DeclareMathOperator{\Hilb}{\mathrm{Hilb}}
\DeclareMathOperator{\Coker}{\mathrm{Coker}}

\newcommand{\field}{\mathbbm{k}}

\newcommand{\ZZ}{{\mathbb Z}}

\newcommand{\im}{\operatorname{Im}}

\newcommand{\mideal}{\ensuremath{\mathfrak{m}}}

\newcommand{\Hom}{\ensuremath{\mathrm{Hom}}\hspace{1pt}}
\newcommand{\Ext}{\ensuremath{\mathrm{Ext}}\hspace{1pt}}
\newcommand{\Ker}{\ensuremath{\mathrm{Ker}}\hspace{1pt}}

\title{Almost Buchsbaumness of some rings arising from complexes with isolated singularities}
\author{Connor Sawaske\\
\small Department of Mathematics\\[-0.8ex]
\small University of Washington\\[-0.8ex]
\small Seattle, WA 98195-4350, USA\\[-0.8ex]
\small \texttt{sawaske@math.washington.edu}
}

%%%%%%%%%%%%%%%%%%%%%%%%%%%%%%%%%%%%%%%%%%%%
%%%%%%%%%%%%%%%%%%%%%%%%%%%%%%%%%%%%%%%%%%%%
\begin{document}
%%%%%%%%%%%%%%%%%%%%%%%%%%%%%%%%%%%%%%%%%%%%
%%%%%%%%%%%%%%%%%%%%%%%%%%%%%%%%%%%%%%%%%%%%
\maketitle

\begin{abstract}
We study properties of the Stanley--Reisner rings of simplicial complexes with isolated singularities modulo two generic linear forms. Miller, Novik, and Swartz proved that if a complex has homologically isolated singularities, then its Stanley--Reisner ring modulo one generic linear form is Buchsbaum. Here we examine the case of non-homologically isolated singularities, providing many examples in which the Stanley--Reisner ring modulo two generic linear forms is a quasi-Buchsbaum but not Buchsbaum ring.
\end{abstract}

%%%%%%%%%%%%%%%%%%%%%%%
%Introduction
%%%%%%%%%%%%%%%%%%%%%%%

\section{Introduction}

Many combinatorial, algebraic, and topological statements about polytopes and triangulations of spheres or manifolds have been proven through the study of their Stanley--Reisner rings. These rings are well-understood, and the translation of their algebraic properties into combinatorial and topological invariants has a storied and celebrated past. The usefulness of this approach is made apparent by its presence in decades of continued progressive research (excellent surveys may be found in \cite{St-96} and \cite{KN-faces}).

In contrast, the main objects considered in this paper are simplicial complexes with isolated singularities. A simplicial complex $\Delta$ has isolated singularities if it is pure and the link in $\Delta$ of every face of dimension at least $1$ is Cohen-Macaulay (more precise definitions will be provided later). Common examples are provided by triangulations of a pinched torus, the suspension of a manifold, or more generally by pseudomanifolds that fail to be manifolds at finitely many points (see \cite{GoreskyMacPherson} for an in-depth discussion). The gap between pseudomanifolds and manifolds is well understood from a topological viewpoint, but there are powerful tools available to the Stanley--Reisner rings of triangulations of manifolds that presently lack any meaningful extension to the world of pseudomanifolds. For instance, unlike results due to Stanley (\cite{St-UBC}) and Schenzel (\cite{Schenzel}), we do not know the Hilbert series of a generic Artinian reduction of the Stanley--Reisner ring of such a complex, even when considering a triangulation of the suspension of a manifold that is not a homology sphere.

The central obstruction in extending the pre-existing knowledge of triangulations of manifolds to the singular case is the Stanley--Reisner ring failing to be Buchsbaum. Miller, Novik, and Swartz were able to circumvent this roadblock in the case that the singularities in question are homologically isolated. By showing that a certain quotient of the associated Stanley--Reisner ring is in fact Buchsbaum, they established some enumerative theorems related to $f$- and $h$-vectors in \cite{MNS-sing}. Novik and Swartz were then able to calculate the Hilbert series of a generic Artinian reduction of the Stanley--Reisner ring as well as prove singular analogs of the Dehn-Sommerville relations in \cite{NS-sing}.

These established results were the inspiration for this paper. However, the ultimate purpose here is twofold; for one, we intend for the algebraic implications of isolated singularities to become as equally well-understood as the topological ones. To this end, we will investigate with some precision how the topological properties of singular vertices translate to the algebraic setting of Stanley--Reisner rings. This, in particular, leads to a notion of {\em generically isolated} singularities, defined in Section \ref{sect:preliminaries}. This notion plays a crucial role in our main results described below. We will exhibit similarities and differences between the singular and non-singular cases, examine some special examples, and provide alternate interpretations of some classical results. In doing so, we will see that these rings have interesting properties that are worth studying in their own right; these properties provide the second purpose for this paper. In particular, we present new findings demonstrating that some quotients of Stanley-Reisner rings of simplicial complexes with isolated singularities are very near to being Buchsbaum, and we characterize when and to what degree this occurs.

\begin{theorem}\label{annihilateThm}Let $\Delta$ be a connected simplicial complex with isolated singularities on vertex set $V$, let $\field$ be an infinite field, and denote by $A$ the polynomial ring $\field[x_v: v\in V]$ and by $\mideal$ the irrelevant ideal of $A$. If $\theta_1, \theta_2$ is a generic regular sequence for the Stanley-Reisner ring $\field[\Delta]$, then the local cohomology module $H_\mideal^i(\field[\Delta]/(\theta_1, \theta_2)\field[\Delta])$ satisfies
	\[
	\mideal\cdot H_\mideal^i(\field[\Delta]/(\theta_1, \theta_2)\field[\Delta])=0
	\]
	for all $i$ if and only if the singularities of $\Delta$ are generically isolated.
\end{theorem}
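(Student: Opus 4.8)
The plan is to work entirely with the $\Z^V$-graded local cohomology modules, exploiting Hochster's formula together with Gr\"abe's description of their module structure. Write $R=\field[\Delta]$, $S=R/(\theta_1,\theta_2)R$, and $d=\dim R=\dim\Delta+1$. Recall that $H^i_\mideal(R)_{-a}\cong\tilde H^{\,i-|\supp a|-1}(\lk_\Delta\supp a;\field)$ for $a\in\N^V$, and that multiplication by $x_v$ on $H^\bullet_\mideal(R)$ is induced, degree by degree, by the natural maps $\lk_\Delta(\sigma\cup\{v\})\hookrightarrow\lk_\Delta\sigma$. Because $\Delta$ has isolated singularities, for each $i<d$ the module $H^i_\mideal(R)$ is supported in degrees $-a$ with $\supp a$ either empty or a single singular vertex; hence its only infinite-dimensional part is a finite direct sum of ``rays'' $\bigoplus_{n\ge 1}\tilde H^{\,i-2}(\lk_\Delta v;\field)$, one for each singular vertex $v$, and along the ray attached to $v$ the operator $x_v$ acts as an isomorphism in every sufficiently negative degree.

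The first reduction is to show that $S$ is generalized Cohen--Macaulay, i.e.\ that $H^i_\mideal(S)$ has finite length for $i<\dim S$. I would do this by passing through $R/\theta_1R$ via the short exact sequences $0\to R(-1)\xrightarrow{\theta_1}R\to R/\theta_1R\to 0$ and $0\to (R/\theta_1R)(-1)\xrightarrow{\theta_2}R/\theta_1R\to S\to 0$. Since a generic sequence of linear forms is filter-regular, the kernel and cokernel of each $\theta_j$ on each $H^i_\mideal$ have finite length, and by the previous paragraph $\theta_j$ acts as an isomorphism along every singular ray in all sufficiently negative degrees; so each of the two reductions truncates those rays to finite length, leaving only the top local cohomology module genuinely infinite. (If such a statement has already been recorded above, I would just quote it.) This already places $S$ in the ``generalized Cohen--Macaulay'' regime, where the condition $\mideal\cdot H^i_\mideal(S)=0$ is exactly quasi-Buchsbaumness.

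The second, and substantive, step is to pin down the $\mideal$-action on the finite-length modules $H^i_\mideal(S)$. Chasing the connecting homomorphisms in the two long exact sequences, together with Gr\"abe's formulas, yields an explicit description of $H^i_\mideal(S)$ as a module over $A/(\theta_1,\theta_2)$: in each cohomological degree it is assembled from the groups $\tilde H^\ast(\lk_\Delta v;\field)$ and $\tilde H^\ast(\lk_\Delta\{v,w\};\field)$ for singular vertices $v,w$ (and $\tilde H^\ast(\Delta;\field)$), glued along maps whose entries are the generic coefficients of $\theta_1,\theta_2$, and the action of $x_v$ is the corresponding combination of the link maps $\tilde H^\ast(\lk_\Delta(\sigma\cup\{v\}))\to\tilde H^\ast(\lk_\Delta\sigma)$ with $\sigma$ meeting the singular locus. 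Unwinding the definition of generically isolated singularities from Section~\ref{sect:preliminaries} shows that the simultaneous vanishing of all of these maps, for every $i$ and every singular $v$, is precisely that hypothesis. The ($\Leftarrow$) direction is then immediate. For ($\Rightarrow$) I would argue the contrapositive: if some singularity is not generically isolated the offending link map is nonzero, and by the explicit description it contributes a class in some $H^i_\mideal(S)$ on which the matching $x_v$ acts nontrivially; the one thing left to check is that genericity of $\theta_1,\theta_2$ makes the relevant connecting maps nondegenerate enough that this class is not swallowed by another relation.

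I expect the bookkeeping in this last step to be the main obstacle. Each of the two reductions not only truncates the singular rays but also glues the cohomology of links of individual singular vertices to the cohomology of links of \emph{pairs} of singular vertices (and to $\tilde H^\ast(\Delta;\field)$), and it is precisely the interplay of these gluings under generic coefficients that distinguishes generically isolated singularities from the stronger homologically isolated condition of Miller--Novik--Swartz. Verifying that the generic choices keep the relevant connecting maps as nondegenerate as possible --- so that the obstruction class detected by $x_v$ really survives into $H^i_\mideal(S)$ and so that, conversely, nothing survives when all link maps vanish --- is the technical heart of the argument.
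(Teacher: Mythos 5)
Your overall strategy—iterating the two short exact sequences, tracking kernels and cokernels of $\theta_1,\theta_2$ on local cohomology, and reducing to the definition of generic isolation—matches the paper's in outline, and the first reduction (that $H^i_\mideal(R^2)$ has finite length for $i<\dim R^2$, concentrated in graded degrees $0,1,2$) is correct. But the proposal hand-waves past the step where the real work happens, and in doing so it misses the structure of the argument that actually makes the equivalence go through.

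The crucial point you don't isolate is that the three graded pieces of $H^i_\mideal(R^2)$ behave very differently, and only one of them is sensitive to generic isolation. The degree-$2$ piece is annihilated by $\mideal$ trivially (there is nothing in degree $3$). The degree-$0$ piece is annihilated by $\mideal$ \emph{unconditionally}, regardless of whether the singularities are generically isolated: the paper proves this by a separate diagram chase, reducing to the containment $x_v\cdot H^i_\mideal(R^1)_0\subseteq\theta_2\cdot H^i_\mideal(R^1)_0$, which in turn follows from $x_v\cdot H^i_\mideal(R)_{-1}\subseteq\theta_2\cdot H^i_\mideal(R)_{-1}$ and the genericity of $\theta_2$ (all coefficients nonzero) via Gr\"abe's description. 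Only the degree-$1$ piece is sensitive: after two applications of the snake lemma one finds that $\mideal$ annihilates $H^i_\mideal(R^2)_1$ exactly when $x_v$ acts trivially on $L^{i+2}=\Ker\theta_1^{i+2,0}\cap\Ker\theta_2^{i+2,0}$, i.e.\ exactly when $L^{i+2}=K^{i+2}$, which is the definition of generic isolation. Your claim that the ($\Leftarrow$) direction is ``then immediate'' is therefore not right: the degree-$0$ argument is a genuine and independent piece of the proof, and without it you have not shown quasi-Buchsbaumness even in the generically isolated case.

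Two smaller concerns. First, the groups $\tilde H^*(\lk_\Delta\{v,w\})$ you mention in the third paragraph do not actually enter: for $\Delta$ with isolated singularities, $\lk_\Delta\{v,w\}$ is Cohen--Macaulay, so by Hochster's formula these contributions vanish in the cohomological range that matters ($i<d$). The relevant groups are only $H^*_\emptyset(\Delta)$ and $H^*_v(\Delta)$ for singular $v$, exactly as in Gr\"abe's theorem as stated in the paper. Second, the phrase ``chasing the connecting homomorphisms\dots yields an explicit description'' and ``unwinding the definition\dots shows that the simultaneous vanishing\dots is precisely that hypothesis'' is precisely the part that needs a proof; as written it asserts the conclusion rather than deriving it. The paper's proof supplies exactly this missing content via Construction~\ref{diagramConstruction}, the identification of $\Ker\tau$ with $L^{i+2}$, and the separate degree-$0$ diagram chase; your proposal would need to carry out these computations rather than postulate their outcome.
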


\begin{theorem}\label{surjectivityThm}In the setting of Theorem \ref{annihilateThm}, the canonical maps of graded modules
	\[
	\varphi^i:\Ext_A^i(\field, \field[\Delta]/(\theta_1, \theta_2)\field[\Delta])\to H_\mideal^i(\field[\Delta]/(\theta_1, \theta_2)\field[\Delta])
	\]
	are surjective in all degrees except (possibly) $0$ if and only if the singularities of $\Delta$ are generically isolated.
\end{theorem}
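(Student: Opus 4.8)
The plan is to deduce Theorem~\ref{surjectivityThm} from Theorem~\ref{annihilateThm} together with a degree-by-degree analysis of the relevant local cohomology and $\Ext$ modules. Write $R=\field[\Delta]/(\theta_1,\theta_2)\field[\Delta]$ and $n=|V|$; as usual the interesting range is $i<\dim R$ (for $i=\dim R$ the map $\varphi^i$ genuinely fails surjectivity in infinitely many degrees, and the statement is understood to exclude it). The first step is the structural fact that, because $\Delta$ has isolated singularities, for every $i<\dim R$ the module $H^i_\mideal(R)$ is finite-dimensional and concentrated in internal degrees $0$, $1$, $2$. This follows from Hochster's formula for $H^\bullet_\mideal(\field[\Delta])$ — which in cohomological degree below $\dim\field[\Delta]$ is, under the isolated singularity hypothesis, supported only on the empty face (contributing $\widetilde H^{\bullet-1}(\Delta;\field)$ in internal degree $0$) and on the vertices (contributing the reduced cohomology of vertex links in the negative internal degrees) — fed through the exact sequences $0\to\field[\Delta](-1)\xrightarrow{\theta_1}\field[\Delta]\to\field[\Delta]/\theta_1\to 0$ and $0\to(\field[\Delta]/\theta_1)(-1)\xrightarrow{\theta_2}\field[\Delta]/\theta_1\to R\to 0$, using that a generic (or merely fully supported) linear form acts invertibly on the ``singular part'' of $H^\bullet_\mideal(\field[\Delta])$ in internal degrees $\le -2$. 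In particular $H^i_\mideal(R)_3=0$, so the internal degree $2$ piece of $H^i_\mideal(R)$ automatically lies in $\Hom_A(\field,H^i_\mideal(R))$.

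The second step pins down $\operatorname{coker}(\varphi^i)$ for $i<\dim R$. Factoring $\varphi^i$ as the edge map $\lambda^i\colon\Ext^i_A(\field,R)\to\Hom_A(\field,H^i_\mideal(R))$ of the spectral sequence $E_2^{p,q}=\Ext^p_A(\field,H^q_\mideal(R))\Rightarrow\Ext^{p+q}_A(\field,R)$ followed by the inclusion $\Hom_A(\field,H^i_\mideal(R))\hookrightarrow H^i_\mideal(R)$, one gets an exact sequence $0\to\operatorname{coker}(\lambda^i)\to\operatorname{coker}(\varphi^i)\to H^i_\mideal(R)/\Hom_A(\field,H^i_\mideal(R))\to 0$. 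Since $\Ext^p_A(\field,N)$ is computed from the Koszul resolution of $\field$, it is supported in internal degrees $\{-p,-p+1,-p+2\}$ whenever $N$ is supported in $\{0,1,2\}$; combining this with the first step shows that the only differential out of the zeroth column that can be nonzero is $d_2\colon E_2^{0,i}\to E_2^{2,i-1}$, whose source lies in degrees $\{0,1,2\}$ and whose target lies in degrees $\{-2,-1,0\}$, so it is nonzero only in internal degree $0$. Hence $\operatorname{coker}(\lambda^i)=E_2^{0,i}/E_\infty^{0,i}$ is concentrated in internal degree $0$ for every $i<\dim R$, unconditionally, and therefore: $\varphi^i$ is surjective in all internal degrees $\neq 0$ if and only if $H^i_\mideal(R)/\Hom_A(\field,H^i_\mideal(R))$ is concentrated in internal degree $0$, which (using the degree restriction from the first step, with the degree $2$ part already in the socle) is equivalent to $\mideal\cdot H^i_\mideal(R)_1=0$.

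It remains to prove that $\mideal\cdot H^i_\mideal(R)_1=0$ for all $i<\dim R$ if and only if the singularities of $\Delta$ are generically isolated. By Theorem~\ref{annihilateThm} the condition $\mideal H^i_\mideal(R)=0$ for all $i$ is equivalent to generic isolatedness, and it trivially implies $\mideal H^i_\mideal(R)_1=0$. For the converse one must show that $\mideal H^i_\mideal(R)_1=0$ for all $i<\dim R$ forces $\mideal H^i_\mideal(R)_0=0$ as well (which, since the degree $2$ part is already socle, forces $\mideal H^i_\mideal(R)=0$). The idea is to identify the multiplication map $A_1\otimes H^i_\mideal(R)_0\to H^i_\mideal(R)_1$ — via the two exact sequences above and a diagram chase — with the multiplication-by-generic-linear-form maps on $H^\bullet_\mideal(\field[\Delta]/\theta_1)$ and hence $H^\bullet_\mideal(\field[\Delta])$ that govern the definition of generically isolated singularities, exploiting the fact that passing from $\field[\Delta]$ to $\field[\Delta]/\theta_1$ to $R$ raises internal degrees by one at each stage, so that a nonzero degree $0$ defect in $R$ is forced to produce a nonzero degree $1$ defect in some $H^j_\mideal(R)$ — a contradiction.

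The main obstacle is exactly this last implication: ruling out that a violation of generic isolatedness can be hidden in internal degree $0$ of the local cohomology of $R$. This requires the explicit bookkeeping of how $H^\bullet_\mideal(R)$ is assembled from $\widetilde H^\bullet(\Delta;\field)$ and the reduced cohomologies of the links of singular vertices, and careful attention to which multiplication maps are controlled by genericity of $\theta_1$ and $\theta_2$; it is here that the notions developed in Section~\ref{sect:preliminaries} enter essentially.
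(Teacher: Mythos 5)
Your overall strategy is to deduce Theorem \ref{surjectivityThm} from Theorem \ref{annihilateThm} via a spectral-sequence reduction, which is genuinely different from the paper's approach. The paper proves Theorem \ref{surjectivityThm} independently of Theorem \ref{annihilateThm}, by an explicit degree-by-degree analysis of the commutative diagrams in Construction \ref{diagramConstruction}, iterating the snake lemma through a three-dimensional diagram to identify the relevant cokernel of $\varphi^i_{R^2}$ in degree $1$ with a quotient by $L^{i+1}=(\Ker\theta_1^{i+1,0})\cap(\Ker\theta_2^{i+1,0})$, and then invoking Proposition \ref{canonicalMapImage} to compare $L^{i+1}$ with $K^{i+1}$. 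Your route via the Grothendieck spectral sequence $E_2^{p,q}=\Ext_A^p(\field,H_\mideal^q(R^2))\Rightarrow\Ext_A^{p+q}(\field,R^2)$ is correct and, I think, cleaner for the first reduction: the Koszul degree shift shows that $\Ext_A^p(\field,N)$ lives in internal degrees $\{-p,-p+1,-p+2\}$ whenever $N$ lives in $\{0,1,2\}$, which forces $\operatorname{coker}(\lambda^i)$ to be concentrated in internal degree $0$ for $i<\dim R^2$ and so reduces surjectivity of $\varphi^i$ away from degree $0$ to the single condition $\mideal\cdot H_\mideal^i(R^2)_1=0$. This is a nice observation and would make a streamlined alternative to the paper's first three diagrams.

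However, there is a genuine gap in the last step. You reduce to showing ``$\mideal\cdot H_\mideal^i(R^2)_1=0$ for all $i<\dim R^2$ if and only if the singularities of $\Delta$ are generically isolated,'' and for the hard direction you propose to prove that a nonzero action of $\mideal$ on $H_\mideal^i(R^2)_0$ would force a nonzero action on some $H_\mideal^j(R^2)_1$. This claim does not follow from anything you have set up (the image of $\mideal\cdot H_\mideal^i(R^2)_0$ lives in $H_\mideal^i(R^2)_1$, but nothing says that applying $\mideal$ once more is nonzero), and in fact the direction you are trying to argue is moot: the paper's proof of Theorem \ref{annihilateThm} shows, unconditionally, that $\mideal\cdot H_\mideal^i(R^2)_0=0$ for all $i$, independent of any isolation hypothesis. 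The correct way to finish your argument is to invoke exactly this fact: since $\mideal\cdot H_\mideal^i(R^2)_0=0$ always and $\mideal\cdot H_\mideal^i(R^2)_2=0$ trivially (there is nothing in internal degree $3$), the condition $\mideal\cdot H_\mideal^i(R^2)_1=0$ for all $i<\dim R^2$ is equivalent to $\mideal\cdot H_\mideal^i(R^2)=0$ for all $i<\dim R^2$, which by Theorem \ref{annihilateThm} is equivalent to generic isolation. Alternatively, you must carry out the bookkeeping you allude to --- tracing $\mideal\cdot H_\mideal^i(R^2)_1$ through the two connecting homomorphisms down to the intersection $L^{i+2}\subseteq\bigoplus_v H_v^{i+1}(\Delta)$ and comparing with $K^{i+2}$, which is precisely the degree-$1$ half of the paper's proof of Theorem \ref{annihilateThm}. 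As written, your last paragraph is a statement of intent rather than a proof, and the specific mechanism you describe (``degree $0$ defect forces a degree $1$ defect'') is not the right mechanism.
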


The condition $\mideal\cdot H_\mideal^i(\field[\Delta]/(\theta_1, \theta_2)\field[\Delta])=0$ in Theorem \ref{annihilateThm} establishes a strong necessary condition for a ring to be Buchsbaum (see \cite[Corollary 1.5]{GS-Index}), known as quasi-Buchsbaumness. These rings were first introduced by Goto and Suzuki (\cite{GS-Index}). As with Buchsbaum rings, the properties and various characterizations of quasi-Buchsbaum rings have long been of some interest (see, e.g., \cite{S-Quasi} and \cite{Y-Quasi}). The usefulness of the property is evidenced, for example, by its equivalence to Buchsbaumness in some special cases (\cite[Corollary 3.6]{StVo}).

The maps $\varphi^i$ being surjective in Theorem \ref{surjectivityThm} is a complement to the quasi-Buchsbaum property in that it is incredibly near to one characterization of Buchsbaumness (see Theorem \ref{surjective}). As we will see in Corollary \ref{SuspensionsAlmostBuchs}, if $\Gamma$ is a Buchsbaum (but not Cohen-Macaulay) complex and $\Delta$ is an arbitrary triangulation of the geometric realization of the suspension of $\Gamma$, then $\field[\Delta]/(\theta_1, \theta_2)\field[\Delta]$ is never Buchsbaum. Though Vogel (\cite{V-Quasi}) and Goto (\cite{G-Quasi}) provided initial examples of quasi-Buchsbaum but not Buchsbaum rings, here we exhibit an infinite family of quasi-Buchsbaum rings of arbitrary dimensions and varying depths which fail in a geometrically tangible way to be fully Buchsbaum in only the slightest sense.

The structure of the paper is as follows. In Section \ref{sect:preliminaries} we provide definitions and foundational results, allowing for some initial computations in Section \ref{calculations}. We prove our main results in Section \ref{results}, and in Section 5 we will use some properties of quasi-Buchsbaum rings to calculate the Hilbert series of a certain Artinian reduction of the Stanley--Reisner ring of a complex with isolated singularities. We will close with comments and open problems in Section \ref{comments}.

%%%%%%%%%%%%%%%%%%%%%%%%%%%
%Preliminaries
%%%%%%%%%%%%%%%%%%%%%%%%%%%

\section{Preliminaries} \label{sect:preliminaries}

This paper has been largely influenced by the works of Miller, Novik, and Swartz in \cite{MNS-sing} and Novik and Swartz in \cite{NS-sing}. In order to retain consistency with these references, much of their notation will be adopted for our uses as well.

\subsection{Combinatorics and topology}

%%%%%%%%%%%%
%Combinatorics and Geometry
%%%%%%%%%%%%%

A \textbf{simplicial complex} $\Delta$ with vertex set $V$ is a collection of subsets of $V$ that is closed under inclusion. The elements of $\Delta$ are called \textbf{faces}, and the \textbf{dimension} of a face $F$ is $\dim F:=|F|-1$. The $0$-dimensional faces in $\Delta$ are called \textbf{vertices}, and the maximal faces under inclusion are called \textbf{facets}. We say that $\Delta$ is \textbf{pure} if all facets have the same dimension. The dimension of the complex $\Delta$ is $\dim \Delta:=\max\{\dim F : F\in \Delta\}$. For the remainder of this paper, unless stated otherwise we will assume that a simplicial complex $\Delta$ is pure of dimension $d-1$ with vertex set $V$.

The \textbf{link} of a face $F$ is the subcomplex of $\Delta$ defined by
\[
\lk_\Delta F =\{G\in \Delta: F\cup G\in \Delta, F\cap G = \emptyset\},
\]
and the \textbf{contrastar} of a face $F$ is defined by
\[
\cost_\Delta F = \{G\in \Delta: F\not\subset G\}.
\]
In the case that $F=\{v\}$ is a vertex, we write $\lk_\Delta v $ and $\cost_\Delta v$ instead of $\lk_\Delta\{v\}$ and $\cost_\Delta \{v\}$, respectively. If $W\subset V$, then the \textbf{induced subcomplex} $\Delta_W$ is the simplicial complex $\{F\in \Delta: F\subset W\}$. For $0\le i\le d-1$, the complex $\Delta^{(i)}:=\{F\in \Delta: |F|\le i+1\}$ is the \textbf{$i$-skeleton} of $\Delta$.

Given a field $\field$, denote by $H_i(\Delta)$ and $H^i(\Delta)$ the $i$th (simplicial) homology and cohomology groups of $\Delta$ computed over $\field$, respectively (definitions and further resources may be found in \cite{Hatcher}). If $F$ is a face of $\Delta$, we denote by $H^i_F(\Delta)$ the relative (simplicial) cohomology group $H^i(\Delta, \cost_\Delta F)$ (in line with other conventions, we write $H^i_v(\Delta)$ for $H^i_{\{v\}}(\Delta)$). It will often be helpful to identify $H_F^i(\Delta)$ with $\tilde{H}^{i-|F|}(\lk_\Delta F)$ (see, e.g., \cite[Section 1.3]{Grabe}); in particular, note that $H^i_\emptyset(\Delta) = \tilde{H}^i(\Delta)$, the reduced cohomology group of $\Delta$. Finally, let
\[\iota_F^i:H^i_F(\Delta)\to H^i_\emptyset(\Delta)\]
be the map induced by the inclusion $(\Delta, \emptyset)\to (\Delta, \cost F)$.

 If $H_\emptyset^0(\Delta)=0$, then we call $\Delta$ \textbf{connected}. We say that a face $F$ of $\Delta$ is \textbf{singular} if $H_F^i(\Delta)\not=0$ for some $i<d-1$. Conversely, if $H_F^i(\Delta)=0$ for all $i<d-1$, we call $F$ a \textbf{nonsingular} face. We call $\Delta$ \textbf{Cohen-Macaulay} over $\field$ if every face of $\Delta$ (including $\emptyset$) is nonsingular, and we call $\Delta$ \textbf{Buchsbaum} (over $\field$) if it is pure and every face aside from $\emptyset$ is nonsingular.

If $\Delta$ contains a singular face, then we define the \textbf{singularity dimension} of $\Delta$ to be $\max\{\dim F: F\in\Delta$ and $F$ is singular$\}$. If the singularity dimension of $\Delta$ is $0$, then we say that $\Delta$ has \textbf{isolated singularities}. Such complexes will be our main objects of study. As a special case, if the images of the maps $\iota_v^i:H_v^i(\Delta)\to H_\emptyset^i(\Delta)$ are linearly independent as vector subspaces of $H_\emptyset^i(\Delta)$, then we call the singularities of $\Delta$ \textbf{homologically isolated}. Equivalently, the singularities of $\Delta$ are homologically isolated if the kernel of the sum of maps
\[
\left(\sum_{v\in V} \iota_v^i\right):\bigoplus_{v\in V}H_v^i(\Delta)\to H_\emptyset^i(\Delta)
\]
decomposes as the direct sum
\[
\bigoplus_{v\in V}\left(\Ker \iota_v^i:H_v^i(\Delta)\to H_\emptyset^i(\Delta)\right).
\]
Lastly, we call the singularities of $\Delta$ \textbf{generically isolated} if for sufficiently generic choices of coefficients $\{\alpha_v:v\in V\}$ and $\{\gamma_v:v\in V\}$, the two maps $\theta_\alpha$ and $\theta_\gamma$ defined by
\[
\left(\sum_{v\in V} \alpha_v\iota_v^i\right):\bigoplus_{v\in V}H_v^i(\Delta)\to H_\emptyset^i(\Delta)\hspace{15pt}
\text{ and }\hspace{15pt}
\left(\sum_{v\in V} \gamma_v\iota_v^i\right):\bigoplus_{v\in V}H_v^i(\Delta)\to H_\emptyset^i(\Delta),
\]
respectively, satisfy
\[
(\Ker \theta_\alpha) \cap (\Ker \theta_\gamma) =\bigoplus_{v\in V}\left(\Ker \iota_v^i:H_v^i(\Delta)\to H_\emptyset^i(\Delta)\right)
\]
for all $i$.

%%%%%%%%%%%%%%%%%%%
%Algebra
%%%%%%%%%%%%%%

\subsection{The connection to algebra}\label{Algebra}
For the remained of the paper, let $\field$ be a fixed infinite field. Define $A:=\field[x_v:v\in V]$ and let $\mideal=(x_v:v\in V)$ be the irrelevant ideal of $A$. If $F\subset V$, let
\[
x_F=\prod_{v\in F}x_v
\]
and define the \textbf{Stanley-Reisner ideal} $I_\Delta$ by
\[
I_\Delta = (x_F: F\not\in \Delta).
\]
The ring $\field[\Delta]:=A/I_\Delta$ is the \textbf{Stanley-Reisner ring} of $\Delta$. We will usually consider $\field[\Delta]$ as an $A$-module that is graded with respect to $\ZZ$ by degree.

Given any $\ZZ$-graded $A$-module $M$ of Krull dimension $d$, we denote by $M_j$ the collection of homogeneous elements of $M$ of degree $j$. A sequence $\Theta=(\theta_1, \theta_2, \ldots, \theta_d)$ of linear forms in $A$ is called a \textbf{homogeneous system of parameters} (or \textbf{h.s.o.p.}) for $M$ if each $\theta_i$ is a homogeneous element of $A$ and $M/\Theta M$ is a finite-dimensional vector space over $\field$. In the case that each $\theta_i$ is linear, we call $\Theta$ a \textbf{linear system of parameters} (or \textbf{l.s.o.p.}) for $M$. If
\[(\theta_1, \ldots, \theta_{i-1})M:_M\theta_i = (\theta_1, \ldots, \theta_{i-1})M:_M\mideal
\] 
for $1\le i\le d$ for any choice of $\Theta$, then we call $M$ \textbf{Buchsbaum}. The reasoning behind the geometric definition of a Buchsbaum simplicial complex is made apparent in the following theorem due to Schenzel (\cite{Schenzel}):
\begin{theorem}A pure simplicial complex $\Delta$ is Buchsbaum over $\field$ if and only if $\field[\Delta]$ is a Buchsbaum $A$-module.
\end{theorem}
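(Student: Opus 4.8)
The plan is to prove both implications of Schenzel's theorem by relating the algebraic Buchsbaum condition on $\field[\Delta]$ to the topological nonsingularity of all nonempty faces, using the standard machinery that translates local cohomology of Stanley--Reisner rings into simplicial cohomology of links. The key computational input is Hochster's formula (or Gr\"abe's refinement), which expresses $H_\mideal^i(\field[\Delta])$ in multigraded pieces as direct sums of the relative cohomology groups $H_F^i(\Delta) \cong \tilde H^{i-|F|}(\lk_\Delta F)$ over faces $F$, and carefully records how the $A$-module structure (multiplication by the variables $x_v$) acts on these pieces via the maps $\iota_F^i$ relating the cohomology of a face to that of a smaller face.

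First I would recall the characterization (due to St\"uckrad--Vogel, used throughout this area) that a finitely generated graded $A$-module $M$ of dimension $d$ is Buchsbaum if and only if the natural maps $\varphi^i : \Ext_A^i(\field, M) \to H_\mideal^i(M)$ are surjective for all $i < d$ — or equivalently, that $\mideal \cdot H_\mideal^i(M) = 0$ for $i<d$ together with a condition ensuring the canonical filtration behaves correctly; in fact for Stanley--Reisner rings the cleanest route is via Reisner-type/Schenzel criteria. Concretely, I would show: (i) if every nonempty face of $\Delta$ is nonsingular, then for each $i < d$ the module $H_\mideal^i(\field[\Delta])$ is concentrated in degree $0$ (since the only faces $F$ contributing to $H_F^i(\Delta)$ with $i<d-1=\dim\Delta$ are, by nonsingularity, only $F=\emptyset$, which sits in internal degree $0$), hence is a finite-dimensional $\field$-vector space annihilated by $\mideal$; and then invoke the theorem of St\"uckrad--Vogel that a complex $\Delta$ with $\dim_\field H_\mideal^i(\field[\Delta]) < \infty$ for all $i<d$ — equivalently $\field[\Delta]$ satisfying Serre's condition on punctured spectrum — is Buchsbaum precisely when additionally the connecting/comparison maps are surjective, which holds automatically when the modules are concentrated in degree zero. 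Conversely, (ii) if some nonempty face $F$ is singular, pick such an $F$ of largest dimension; then $H_\mideal^{i}(\field[\Delta])$ has a nonzero component in internal degree $-|F| \ne 0$, so it is not a finite-dimensional vector space (its support includes monomials with nontrivial $x_F$-part), which contradicts the necessary condition that $H_\mideal^i(\field[\Delta])$ be finitely generated as an $\field$-module for all $i<d$ — a known requirement for Buchsbaumness. This forces $\Delta$ to be Buchsbaum.

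The main obstacle I anticipate is not the degree-zero concentration argument, which is essentially Reisner's criterion applied face-by-face, but rather verifying carefully that the \emph{surjectivity of the comparison maps $\varphi^i$} holds under the nonsingularity hypothesis. Finite-length local cohomology modules only guarantee the weaker \emph{quasi}-Buchsbaum property (as the rest of this very paper demonstrates!), so one cannot skip this step. The resolution is to observe that when $\Delta$ is topologically Buchsbaum, one can take an l.s.o.p. $\Theta$ of length $d$ and show directly that $\field[\Delta]/\Theta\field[\Delta]$ has Hilbert function determined combinatorially (Schenzel's formula) in a way forcing the colon condition $(\theta_1,\dots,\theta_{i-1})\field[\Delta] :_{\field[\Delta]} \theta_i = (\theta_1,\dots,\theta_{i-1})\field[\Delta] :_{\field[\Delta]} \mideal$ — here one uses that $\lk_\Delta v$ is Cohen--Macaulay of dimension $d-2$ for every vertex $v$, hence $\field[\lk_\Delta v]$ is a genuine Cohen--Macaulay ring, and an inductive argument on $\dim\Delta$ (removing a generic linear form and passing to links) closes the loop. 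Alternatively, and perhaps more cleanly, I would cite the general principle that for a simplicial complex, finiteness of $H_\mideal^i(\field[\Delta])$ in the relevant range \emph{together with} each such module being concentrated in internal degree $0$ is equivalent to Buchsbaumness via the graded version of the St\"uckrad--Vogel criterion; the degree-zero concentration is exactly what upgrades quasi-Buchsbaum to Buchsbaum here, and it fails precisely when a positive-dimensional face is singular.
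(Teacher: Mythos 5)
The paper does not actually give a proof of this theorem---it is stated with a citation to Schenzel---so there is no ``paper's own proof'' to compare against; I can only assess your sketch on its merits. Your overall strategy (translate Buchsbaumness into surjectivity of $\varphi^i$ via St\"uckrad--Vogel and use Hochster's formula to turn nonsingularity of nonempty faces into degree-zero concentration of local cohomology) is the standard route, and the ``only if'' direction is essentially right: if $F \neq \emptyset$ is singular, then by Hochster's formula $H^i_\mideal(\field[\Delta])$ picks up a nonzero $\mathbb{Z}^{|V|}$-graded piece for \emph{every} multidegree supported on $F$, which gives nonzero $\ZZ$-graded pieces in all degrees $\le -|F|$ and hence infinite $\field$-dimension. (Your phrasing ``has a nonzero component in degree $-|F|\ne 0$, so it is not finite-dimensional'' is a bit compressed---a single nonzero negative-degree component would not be enough on its own---but your parenthetical remark about $x_F$-multiples makes clear you have the right picture.)

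The real gap is exactly where you flag it: upgrading quasi-Buchsbaumness to Buchsbaumness in the ``if'' direction. Of the two resolutions you offer, neither is quite a proof. The first (compute $\Hilb(\field[\Delta]/\Theta\field[\Delta])$ and ``force the colon conditions'') is unsubstantiated---matching Schenzel's Hilbert-series formula is a numerical consequence of Buchsbaumness, and there is no argument given that this numerical datum is enough to recover the module-theoretic colon condition. The second (``the general principle that finite local cohomology concentrated in internal degree $0$ is equivalent to Buchsbaumness'') is closer, but as stated it is a restatement of the theorem you are trying to prove rather than an independently citable fact; the St\"uckrad--Vogel criterion by itself does not tell you $\varphi^i$ is surjective in degree $0$. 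The missing ingredient is available in this very paper: Miyazaki's Theorem~\ref{canonicalMap} identifies $\varphi^i_R$ in degree $0$ with the \emph{identity} map on $H^{i-1}_\emptyset(\Delta)$. Once you know $H^i_\mideal(\field[\Delta])$ is concentrated in degree $0$ for $i<d$ (from Hochster's formula and nonsingularity of nonempty faces), surjectivity of $\varphi^i$ in degrees $\neq 0$ is vacuous and surjectivity in degree $0$ is Miyazaki's identity, so Theorem~\ref{surjective} applies. That closes the argument cleanly and is the same mechanism the paper itself uses repeatedly (e.g.\ in the proof of Proposition~\ref{homIsoIsBuchs}).
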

Two collections of objects that are most vital to our results are the $A$-modules \linebreak $\Ext_A^i(\field, \field[\Delta])$ and $H_\mideal^i(\field[\Delta])$. An excellent resource for their construction and basic properties is \cite{24Hours}. In the case of Stanley-Reisner rings, the $\ZZ$-graded structure of $\Ext_A^i(\field, \field[\Delta])$ is provided by \cite[Theorem 1]{characterizations}:

\begin{theorem}[Miyazaki]\label{ExtStructure}Let $\Delta$ be a simplicial complex. Then as vector spaces over $\field$,
	\[
	\Ext_A^i(\field, \field[\Delta])_j\cong \left\{\begin{array}{*3{>{\displaystyle}l}} 0 & \,\,\, & j<-1\text{ or }j>0 \\ H_\emptyset^{i-1}(\Delta) & & j=0 \\ \bigoplus_{v\in V}H_\emptyset^{i-2}(\cost_\Delta v) & & j=-1.\end{array} \right.
	\]
\end{theorem}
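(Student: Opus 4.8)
The plan is to compute these $\Ext$-modules directly from the Koszul resolution of $\field$, carrying the full $\ZZ^{V}$-grading and reading off the simplicial cohomology of $\Delta$ one multidegree at a time. Let $K_\bullet$ be the Koszul complex on the variables $\{x_v:v\in V\}$, which is a minimal $\ZZ^{V}$-graded free resolution of $\field=A/\mideal$; its $p$-th term is $K_p=\bigoplus_{S\subseteq V,\,|S|=p}A\,e_S$ with $e_S$ in multidegree $\mathbf{1}_S$ (the indicator vector of $S$), and the differential is $e_S\mapsto\sum_{v\in S}\pm x_v\,e_{S\setminus v}$. Then $\Ext_A^i(\field,\field[\Delta])=H^i\big(\Hom_A(K_\bullet,\field[\Delta])\big)$, where $\Hom_A(K_p,\field[\Delta])=\bigoplus_{|S|=p}\field[\Delta]\,e_S^{*}$ with $e_S^{*}$ in multidegree $-\mathbf{1}_S$ and transpose differential $e_S^{*}\mapsto\sum_{v\notin S}\pm x_v\,e_{S\cup v}^{*}$. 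Since every map here is $\ZZ^{V}$-homogeneous, it suffices to compute cohomology in each multidegree $\mathbf{a}\in\ZZ^{V}$ separately.

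Fix $\mathbf{a}$. Because $\field[\Delta]_{\mathbf{b}}$ is one-dimensional exactly when $\mathbf{b}\in\N^{V}$ and $\supp\mathbf{b}\in\Delta$, and is zero otherwise, the multidegree-$\mathbf{a}$ piece of $\Hom_A(K_p,\field[\Delta])$ has a basis indexed by the sets $S$ with $|S|=p$, $\mathbf{a}+\mathbf{1}_S\ge 0$, and $\supp(\mathbf{a}+\mathbf{1}_S)\in\Delta$. In particular this piece vanishes for all $p$ unless every entry of $\mathbf{a}$ is at least $-1$; set $T=\{v:a_v=-1\}$ and $P=\{v:a_v\ge 1\}$. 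The surviving $S$ are precisely those containing $T$ for which $P\cup(S\setminus T)\in\Delta$, and writing $S=T\sqcup R$ with $R\subseteq V\setminus T$ identifies the strand $\Hom_A(K_\bullet,\field[\Delta])_{\mathbf{a}}$, after shifting homological degree by $|T|$, with the reduced simplicial cochain complex of $\Delta_{V\setminus T}$ when $P=\emptyset$; when $P\neq\emptyset$, the same reorganisation exhibits it as a tensor product of such a cochain complex with the reduced cochain complex of the full simplex on $P$, which is acyclic, so the strand contributes nothing. The delicate point is to check that the transpose Koszul differential, restricted to a fixed multidegree, agrees up to one overall sign with the simplicial coboundary of $\Delta_{V\setminus T}$; matching the signs in $e_S^{*}\mapsto\sum_{v\notin S}\pm x_v\,e_{S\cup v}^{*}$ with the incidence signs of the simplicial chain complex is where essentially all the bookkeeping sits.

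Taking cohomology of the strands then yields, for $\mathbf{a}\in\{0,-1\}^{V}$ with $T=\{v:a_v=-1\}$, the isomorphism $\Ext_A^i(\field,\field[\Delta])_{\mathbf{a}}\cong\tilde{H}^{\,i-|T|-1}(\Delta_{V\setminus T})$, and $\Ext_A^i(\field,\field[\Delta])_{\mathbf{a}}=0$ for every other $\mathbf{a}$. Summing the multidegrees that have a given total degree, and using $H_\emptyset^{k}(\Gamma)=\tilde{H}^{k}(\Gamma)$ together with $\Delta_{V\setminus\{v\}}=\cost_\Delta v$: the strand $\mathbf{a}=\mathbf{0}$ (the case $T=\emptyset$) contributes $\tilde{H}^{i-1}(\Delta)=H_\emptyset^{i-1}(\Delta)$ in degree $0$; the strands with exactly one coordinate equal to $-1$ contribute $\bigoplus_{v\in V}\tilde{H}^{i-2}(\cost_\Delta v)=\bigoplus_{v\in V}H_\emptyset^{i-2}(\cost_\Delta v)$ in degree $-1$; no strand has positive total degree, which accounts for the vanishing when $j>0$; and the vanishing for $j<-1$ is read off the same description. (Alternatively one may cite Hochster's formula $\beta_{|V|-i,\,W}(\field[\Delta])=\dim_{\field}\tilde{H}_{|W|-(|V|-i)-1}(\Delta_W)$ together with the self-duality of the Koszul complex, which gives $\Ext_A^i(\field,M)\cong\Tor^A_{|V|-i}(\field,M)$ up to a shift of internal degree by $|V|$, and specialise.)

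The heart of the proof, and the main obstacle, is the middle step: producing the multidegree-wise isomorphism between $\Hom_A(K_\bullet,\field[\Delta])$ and the appropriately shifted --- and, when $P\neq\emptyset$, simplex-augmented --- reduced cochain complexes of the induced subcomplexes $\Delta_{V\setminus T}$, with compatible differentials. Once that identification is secured, the acyclicity of the ``bad'' strands and the specialisations producing the three cases of the statement are routine verifications.
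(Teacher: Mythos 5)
The paper itself does not prove this statement --- it is quoted from Miyazaki's paper and only Remark 2.4 comments on coarsening the $\ZZ^{|V|}$-grading to a $\ZZ$-grading --- so there is no internal proof to compare against, and your Koszul-resolution/Hochster-type computation is exactly the standard route to such a statement. The multidegree-wise identification of $\Hom_A(K_\bullet,\field[\Delta])_{\mathbf a}$ with (shifted) reduced cochain complexes of $\Delta_{V\setminus T}$, and the acyclicity of the strands with $P\neq\emptyset$, are correct, and they give the correct $\ZZ^{V}$-graded formula
\[
\Ext_A^i(\field,\field[\Delta])_{\mathbf a}\;\cong\;\tilde H^{\,i-|T|-1}(\Delta_{V\setminus T})\qquad\text{for }\mathbf a\in\{0,-1\}^{V},\ T=\{v:a_v=-1\},
\]
and vanishing for all other $\mathbf a$. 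That handles the cases $j>0$, $j=0$, and $j=-1$ cleanly.

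The gap is in the last clause, where you assert that ``the vanishing for $j<-1$ is read off the same description.'' It is not: your own formula gives potentially nonzero contributions in every degree $j=-|T|$ with $|T|\ge 2$, namely $\tilde H^{\,i-|T|-1}(\Delta_{V\setminus T})$, and these do not vanish in general. The extreme case $T=V$ already shows this: $\Ext_A^{|V|}(\field,\field[\Delta])_{-\mathbf 1_V}\cong\tilde H^{-1}(\Delta_\emptyset)=\tilde H^{-1}(\{\emptyset\})\cong\field$, which is concentrated in $\ZZ$-degree $-|V|<-1$. For a concrete instance, $\Delta=\partial\Delta^2$ gives $\Ext_A^3(\field,\field[x,y,z]/(xyz))\cong\field$ in degree $-3$, even though $\tilde H^2(\Delta)=0$ and $\tilde H^1(\cost_\Delta v)=0$ for every $v$. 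One can also produce nonzero contributions with $2\le|T|<|V|$ and $i$ in a relevant range; e.g.\ for $\Delta$ the suspension of two disjoint $3$-cycles (a connected complex with isolated singularities at the two apexes $a,b$), taking $T=\{a,b\}$ yields $\Ext_A^3(\field,\field[\Delta])_{-2}\supseteq\tilde H^0(\Delta_{V\setminus\{a,b\}})\neq 0$. So the $j<-1$ vanishing cannot be ``read off'' from the $\ZZ^{V}$-graded formula; it requires either a restriction on the range of $i$ or an additional topological hypothesis on $\Delta$ forcing $\tilde H^{\,i-|T|-1}(\Delta_{V\setminus T})=0$ whenever $|T|\ge 2$, and your proof states and verifies neither. (The paper's own phrasing of the theorem shares this imprecision, but since you are presenting a self-contained proof, the burden is on you to justify or correctly qualify that clause.)
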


 The local cohomology modules $H^i_\mideal(\field[\Delta])$ may be computed as the direct limit of the $\Ext_A^i(A/\mideal^l, \field[\Delta])$ modules (see \cite[Corollary 1]{characterizations}). However, we will also need a thorough understanding of the $A$-module structure of the local cohomology modules of $\field[\Delta]$. The necessary details are provided by \cite[Theorem 2]{Grabe}:

\begin{theorem}[Gr\"abe]\label{Grabe}Let $\Delta$ be a $(d-1)$-dimensional simplicial complex with isolated singularities and let $0\le i\le d$. Then
\[
H^i_\mideal(\field[\Delta])_j\cong \left\{\begin{array}{*3{>{\displaystyle}l}} 0 & \,\,\, & j>0 \\ H_\emptyset^{i-1}(\Delta) & & j=0 \\ \bigoplus_{v\in V}H_v^{i-1}(\Delta) & & j<0\end{array} \right.
\]
as $\field$-vector spaces. If $\alpha_v\in H^{i-1}_v(\Delta)$, then the $A$-module structure on $H^i_\mideal(\field[\Delta])$ is given by
\[
\begin{array}{ccc}\cdot x_u: H^i_\mideal(\field[\Delta])_{j-1} & \xrightarrow{\hspace{30pt}} & H^i_\mideal(\field[\Delta])_j\\
\alpha_v &  \xmapsto{\hspace{30pt}} & \left\{ \begin{array}{cc}\alpha_v & j<0\text{ and }u=v \\ \iota_v^{i-1}(\alpha_v)  & j=0\\ 0 & \text{otherwise.}\end{array}\right.
\end{array}
\]
\end{theorem}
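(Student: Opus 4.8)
The plan is to reduce the statement about the $A$-module structure of $H^i_\mideal(\field[\Delta])$ to a purely topological computation via the standard simplicial (Čech-type) complex computing local cohomology of Stanley--Reisner rings, namely the Gräbe complex or the $\field[\Delta]$-dual of the Koszul-type complex indexed by faces. First I would recall that for any simplicial complex $\Delta$ on $V$, the module $H^i_\mideal(\field[\Delta])$ is $\ZZ^{|V|}$-graded, and in multidegree $-\sigma$ (for $\sigma\subseteq V$ a face) its graded piece is $\widetilde H^{i-|\sigma|-1}(\lk_\Delta \sigma)$, while in any multidegree with a strictly positive coordinate it vanishes; collapsing to the coarse $\ZZ$-grading and using $H^i_F(\Delta)\cong\widetilde H^{i-|F|}(\lk_\Delta F)$ gives precisely the claimed $\field$-vector space description once one invokes the isolated singularities hypothesis to see that only $\sigma=\emptyset$ and $\sigma=\{v\}$ contribute below top degree (for faces $F$ of dimension $\ge 1$, $\lk_\Delta F$ is Cohen--Macaulay, so $\widetilde H^{i-|F|}(\lk_\Delta F)$ vanishes unless $i-|F| = \dim\lk_\Delta F = d-1-|F|$, i.e. $i=d-1$; and one checks these top-degree contributions cancel or are absorbed appropriately — this is exactly where ``isolated'' is used).

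Next I would pin down the $A$-module maps $\cdot x_u$. In the fine grading, multiplication by $x_u$ sends the multidegree-$(-\sigma)$ component to the multidegree-$(-\sigma + e_u)$ component; when $u\in\sigma$ this is a map $\widetilde H^{i-|\sigma|-1}(\lk_\Delta\sigma)\to \widetilde H^{i-|\sigma|}(\lk_\Delta(\sigma\setminus u))$ induced, up to sign, by the natural topological map relating the link of $\sigma$ to the link of $\sigma\setminus u$ (a connecting-type map in the long exact sequence of the pair), and when $u\notin\sigma$ the target multidegree has a positive coordinate unless $\sigma$ was empty, forcing the map to be zero. Specializing to $\sigma=\{v\}$: if $u=v$ we land in multidegree $0$ and the map $\widetilde H^{i-2}(\lk_\Delta v)\to\widetilde H^{i-1}(\Delta)$ is, after the identifications $H^{i-1}_v(\Delta)\cong\widetilde H^{i-2}(\lk_\Delta v)$ and $H^{i-1}_\emptyset(\Delta)\cong\widetilde H^{i-1}(\Delta)$, exactly the map $\iota_v^{i-1}$ induced by $(\Delta,\emptyset)\to(\Delta,\cost_\Delta v)$; if $u\ne v$ the target is in a multidegree with a positive coordinate, hence $0$. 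For $\sigma=\emptyset$ (degree $0$), every $x_u$ raises the coarse degree to $1$, where the module vanishes, so those act as $0$. This yields precisely the displayed formula.

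The main obstacle — the step I expect to be delicate — is the sign and naturality bookkeeping in identifying the fine-graded multiplication map $\cdot x_v : H^i_\mideal(\field[\Delta])_{-e_v}\to H^i_\mideal(\field[\Delta])_0$ with $\iota_v^{i-1}$ on the nose (not merely up to a nonzero scalar, and compatibly as $v$ ranges over $V$ and $i$ over all indices). Concretely, one must trace a class through the Čech-type complex $\bigoplus_{\sigma} \field[\Delta]_{x_\sigma}$, reconcile the homological indexing used to compute $H^i_\mideal$ with the cohomological indexing of $\widetilde H^{\bullet}(\lk)$ and of the relative groups $H^\bullet_F(\Delta)=H^\bullet(\Delta,\cost_\Delta F)$, and verify that the boundary map of that complex restricted to the relevant strands is literally the simplicial map inducing $\iota_v^{i-1}$. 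I would handle this by choosing the explicit chain-level model from Gräbe's or Schenzel's treatment, fixing orientations once and for all, and checking the claim on the cochain complexes; the vanishing statements (target has a positive multidegree coordinate, or Cohen--Macaulayness of higher links kills the source) are then routine. The remaining pieces — that $H^i_\mideal(\field[\Delta])_j=0$ for $j>0$ and the $\field$-vector space isomorphisms — follow directly from Gräbe's general formula combined with the isolated-singularities hypothesis, with no further difficulty.
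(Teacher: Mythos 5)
The paper does not actually prove this statement: it is quoted verbatim from Gr\"abe \cite[Theorem 2]{Grabe}, and the only thing the paper adds is the remark following the theorem, which explains that Gr\"abe's original $\ZZ^{|V|}$-graded argument (via $\Hom_A(\mathcal{K}^l_\cdot, \field[\Delta])$) survives the coarsening to a $\ZZ$-grading because the only possibly non-acyclic strands of the coarse Koszul--Hom complex are the ones already present in the fine-graded one. Your plan --- start from the Hochster/Gr\"abe fine-graded description $H^i_\mideal(\field[\Delta])_a \cong \widetilde H^{\,i-|\supp a|-1}(\lk_\Delta \supp a)$, kill multidegrees with a positive coordinate, observe that Cohen--Macaulayness of $\lk_\Delta F$ for $\dim F\ge 1$ removes the higher faces, and then trace $\cdot x_u$ through the Čech/Koszul model to recover $\iota_v^{i-1}$ --- is the same route, so in spirit the approaches agree.

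One concrete slip to fix: the fine-graded contribution of a face $F$ with $|F|=k$ to $H^i_\mideal$ is $\widetilde H^{\,i-k-1}(\lk_\Delta F)$, not $\widetilde H^{\,i-k}(\lk_\Delta F)$. With the corrected index and $\dim \lk_\Delta F = d-1-k$, Cohen--Macaulayness of $\lk_\Delta F$ (for $k\ge 2$) forces this group to vanish unless $i=d$, not $i=d-1$ as you wrote. This matters because at $i=d$ those higher-face contributions genuinely survive and do \emph{not} ``cancel or get absorbed''; the displayed formula should really be read as a statement about $0\le i<d$ (consistent with the way the paper subsequently uses it, always excluding $i=d$). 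You should either restrict $i$ in your argument or note the $i=d$ exception explicitly rather than gesture at a cancellation that does not occur. The part you flag as delicate --- chain-level identification of $\cdot x_v$ with $\iota_v^{i-1}$, with consistent signs across all $v$ and $i$ --- is indeed where the real work of Gr\"abe's proof lies, and your sketch correctly identifies but does not discharge it; since the paper defers to Gr\"abe here, that deferral is acceptable, but your write-up should cite the source rather than leave the bookkeeping as a TODO.
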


It is immediate from Theorem \ref{Grabe} that a pure simplicial complex $\Delta$ is Buchsbaum if and only if it is pure and $H_\mideal^i(\field[\Delta])$ is concentrated in degree $0$ for all $i\not=d$. We note at this point that an $A$-module $M$ is called \textbf{quasi-Buchsbaum} if $\mideal\cdot H_\mideal^i(M)=0$ for all $i$. Evidently, the above theorem implies that $\field[\Delta]$ is Buchsbaum if and only if it is quasi-Buchsbaum; this is certainly not the case for general modules, as we will see!

Elements and homomorphisms related to $H_\mideal^i(\field[\Delta])$ will usually be represented and referenced according to the topological identifications above; these identifications will also be expanded as we progress. As a motivating example, consider the $\field$-vector space
\[
K^i := \bigoplus_{v\in V}\left(\Ker \iota^{i-1}_v:H^{i-1}_v(\Delta)\to H^{i-1}_\emptyset(\Delta)\right).
\]
This space is central to the definition of homological isolation of singularities. Sometimes, it will be easier to identify $K^i$ with its counterpart in local cohomology. If we denote by $H_\mideal^i(\field[\Delta])_v$ the direct summand of $H_\mideal^i(\field[\Delta])_{-1}$ corresponding to $H_v^{i-1}(\Delta)$ in Theorem \ref{Grabe}, then $K^i$ is identified with the submodule 
\[
\bigoplus_{v\in V}\Ker \left(\cdot x_v: H_\mideal^i(\field[\Delta])_v\to H_\mideal^i(\field[\Delta])\right)
\]
of $H_\mideal^i(\field[\Delta])$. In light of how intertwined these two objects are, we will use the notation $K^i$ interchangeable between the two settings. In local cohomology, the equality
\[
K^i=\Ker\left(\sum_{v\in V} \cdot x_v :\bigoplus_{v\in V}H_\mideal^i(\field[\Delta])_v\to H_\mideal^i(\field[\Delta])_0\right)
\]
is equivalent to the singularities of $\Delta$ being homologically isolated. In the same way, if $\theta_1$ and $\theta_2$ are generic linear forms, then the generic isolation of the singularities of $\Delta$ is equivalent to the equality
\[
\Ker\left(\cdot\theta_1:\bigoplus_{v\in V}H_\mideal^i(\field[\Delta])_v\to H_\mideal^i(\field[\Delta])_0\right)\cap \Ker\left(\cdot\theta_2:\bigoplus_{v\in V}H_\mideal^i(\field[\Delta])_v\to H_\mideal^i(\field[\Delta])_0\right)=K^i
\]
holding for all $i$.

\begin{remark}In their original statements, the above structure theorems are written with respect to a $\ZZ^{|V|}$-grading and are proved by examining the chain complex $\Hom_A(\mathcal{K}^l_\cdot, \field[\Delta])$, where $\mathcal{K}^l_\cdot$ is the Koszul complex of $A$ with respect to $\mideal^l$. When coarsening to a $\ZZ$-grading, this chain complex becomes much larger. However, an argument similar to Reisner's original proof that $H^i_\mideal(k[\Delta])_0\cong H_\emptyset^{i-1}(\Delta)$ (see \cite{Reisner}, pp. 41-42) shows that the only potentially non-acyclic components of $\Hom_A(\mathcal{K}^l_\cdot, M)$ under a $\ZZ$-grading are those also appearing in the $\ZZ^{|V|}$-graded complex.
\end{remark}

\section{Auxiliary calculations}\label{calculations}Unless stated otherwise, we will always assume that $\Delta$ is a connected $(d-1)$-dimensional simplicial complex with isolated singularities. Let $V$ be the vertex set of $\Delta$ and set $A:=\field[x_v: v\in V]$. We will always consider $R:=\field[\Delta]$ as an $A$-module. If $\theta_1, \ldots, \theta_d$ is a homogeneous system of parameters for $\Delta$, we denote $R^i := \field[\Delta]/(\theta_1, \ldots, \theta_i)\field[\Delta]$.

Since $\Delta$ is connected, the $1$-skeleton $\Delta^{(1)}$ of $\Delta$ is Cohen-Macaulay and the depth of $R$ is at least $2$ (see \cite[Corollary 2.6]{HibiDepth}). Hence, there exists a homogeneous system of parameters $\theta_1, \ldots, \theta_d$ for $\Delta$ in which $\theta_1, \theta_2$ are linear and form the beginning of a regular sequence for $R$, i.e., $\theta_1$ is a non-zero-divisor on $R$ and $\theta_2$ is a non-zero-divisor on $R^1$. Generically, we may assume that $\theta_1$ and $\theta_2$ both have non-zero coefficients on all $x_v$'s. Unless stated otherwise, we will always work with such a system of parameters for $\Delta$.

Our results primarily depend upon an understanding of the $A$-modules $H_\mideal^i(R^j)$. We begin by computing their dimensions over $\field$ when $j=1$ or $2$ and discussing some connections to the topology of $\Delta$.

\subsection{Local cohomology}\label{cohomology1}
Consider the exact sequence of $A$-modules
\[
0\to R\xrightarrow{\cdot\theta_1} R\xrightarrow{\pi} R^1 \to 0.
\]
By looking at graded pieces of this sequence, there are exact sequences of vector spaces over $\field$ of the form
\begin{equation}\label{S1}
0\to R_{j-1}\xrightarrow{\cdot\theta_1}R_j\xrightarrow{\pi} R^1_j\to 0.
\end{equation}
These sequences induce the following long exact sequence in local cohomology, where $\theta_1^{i, j}:H_\mideal^i(R)_{j-1}\to H_\mideal^i(R)_j$ is the map induced by multiplication, $\pi$ is the map induced by the projection $R\to R^1$, and $\delta$ is the connecting homomorphism:
\begin{equation}\label{LES1}
H^i_\mideal(R)_{j-1}\xrightarrow{\theta_1^{i, j}} H^i_\mideal(R)_j\xrightarrow{\pi} H^i_\mideal(R^1)_j\xrightarrow{\delta}H^{i+1}_\mideal(R)_{j-1}\xrightarrow{\theta_1^{i+1, j}} H^{i+1}_\mideal(R)_j.
\end{equation}
In light of Theorem \ref{Grabe}, we make the following conclusions. When $j>1$, all terms are zero. When $j=1$, $\delta$ is an isomorphism. When $j\le-1$, each $\theta_1^{i, j}$ is an isomorphism (all coefficients of $\theta_1$ are non-zero). When $j=0$, we obtain the short exact sequence
\begin{equation}\label{kercokersequence1}
0\to \Coker\theta_1^{i, 0}\to H_\mideal^i(R^1)_0\to \Ker\theta_1^{i+1, 0}\to 0.
\end{equation}
Hence, as $\field$-vector spaces,
\begin{equation}\label{S1cohomology}
H^i_\mideal(R^1)_j \cong \left\{\begin{array}{cc} H_\emptyset^i(\Delta) & j=1 \\ \Coker \theta_1^{i, 0}\oplus \Ker \theta_1^{i+1, 0} & j= 0 \\ 0 & \text{otherwise.} \end{array}\right.
\end{equation}

It will be useful to keep in mind that $\Coker \theta_1^{i, 0}$ is identified with a quotient of $H^{i-1}_\emptyset(\Delta)$ and that $\Ker\theta_1^{i+1, 0}$ is identified with a submodule of $\oplus_{v\in V}H_v^i(\Delta)$. Although the short exact sequence (\ref{kercokersequence1}) above is not necessarily split, we will leverage the ``geometric'' $A$-module structures of $\Coker \theta_1^{i, 0}$ and $\Ker\theta_1^{i+1, 0}$ along with (\ref{kercokersequence1}) to further analyze $H_\mideal^i(R^1)$ in Section \ref{results}.

Now we repeat this argument; consider the short exact sequence
\begin{equation}\label{S2}
0 \to R^1_{j-1}\xrightarrow{\cdot\theta_2} R^1_j\xrightarrow{\pi} R^2_j\to 0
\end{equation}
of vector spaces over $\field$, giving rise to the long exact sequence
\begin{equation}\label{LES1}
H^i_\mideal(R^1)_{j-1}\xrightarrow{\theta_2^{i, j}}H^i_\mideal(R^1)_j\xrightarrow{\pi} H^i_\mideal(R^2)_j\xrightarrow{\delta}H^{i+1}_\mideal(R^1)_{j-1}\xrightarrow{\theta_2^{i+1, j}}H^{i+1}_\mideal(R^1)_j.
\end{equation}

As in the previous computation, all terms are zero when $j<0$ or $j>2$, $\pi$ is an isomorphism when $j=0$, and $\delta$ is an isomorphism when $j=2$. When $j=1$, we have the exact sequence
\begin{equation}\label{kercokersequence2}
0\to \Coker\theta_2^{i, 1}\to H_\mideal^i(R^2)_1\to \Ker\theta_2^{i+1, 1}\to 0.
\end{equation}
Hence, as vector spaces over $\field$,
\begin{equation}\label{S2cohomology}
H^i_\mideal(R^2)_j \cong \left\{\begin{array}{*2{>{\displaystyle}c}} H_\emptyset^{i+1}(\Delta) & j=2 \\\Coker \theta_2^{i, 1}\oplus \Ker \theta_2^{i+1, 1} & j=1 \\\Coker \theta_1^{i, 0}\oplus \Ker \theta_1^{i+1, 0} & j= 0 \\ 0 & \text{otherwise.} \end{array}\right.
\end{equation}

\subsection{Local cohomology: suspensions}\label{cohomology2}
We will now briefly consider the special case of suspensions. Suppose $\Delta$ is an arbitrary triangulation of the suspension of a $(d-2)$-dimensional manifold that is not Cohen-Macaulay, with suspension points $a$ and $b$ (so that $a$ and $b$ are isolated singularities of $\Delta$). In this context, the maps $\iota_a^i$ and $\iota_b^i$ from Section \ref{sect:preliminaries} are isomorphisms. If $g^i$ is a generator for $H_\emptyset^i(\Delta)$, denote $g_a^i:=(\iota_a^i)^{-1}(g)\in H_a^i(\Delta)$ and $g_b^i:=(\iota_b^i)^{-1}(g)\in H_b^i(\Delta)$. As usual, we will consider these generators interchangeable with their corresponding elements in $H_\mideal^{i+1}(R)$.

Examining the sequence in (\ref{kercokersequence1}) for this special case, suppose $\theta_1=\sum_{v\in V}x_v$ and $\theta_2=\sum_{v\in V}c_vx_v$ with $c_a\not= c_b$ and $c_a, c_b\not=0$. Given $g^{i-1}$ a generator of $H_\emptyset^{i-1}(\Delta)$, the map $\theta_1^{i, 0}$ acts via $\theta_1^{i, 0}(g_a^{i-1})=\theta_1^{i, 0}(g_b^{i-1})=g^{i-1}$. Hence, $\theta_1^{i, 0}$ is a surjection whose kernel is generated as a direct sum by elements of the form $(g_a^{i-1}-g_b^{i-1})$. In particular, $H^i_\mideal(R^1)_0\cong \Ker(\theta_1^{i+1, 0})\cong H_\emptyset^i(\Delta)$. In summary,
\[
H^i_\mideal(R^1)_j \cong \left\{\begin{array}{cc} H_\emptyset^i(\Delta) & j=1 \\ H_\emptyset^i(\Delta) & j= 0 \\ 0 & \text{otherwise,} \end{array}\right.
\]
under the aforementioned isomorphisms. 

Now repeat the process above using the sequence in (\ref{kercokersequence2}). If $g^{i}$ is a generator of $H_\emptyset^i(\Delta)$, then $\theta_2^{i+1, 0}$ acts on $H^{i+1}_\mideal(R)_{-1}$ via $\theta_2^{i+1, 0}(g_a^i)=c_ag^i$ and $\theta_2^{i+1, 0}(g_b^i)=c_bg^i$. In particular, identifying $H^{i+1}_\mideal(R^1)_0$ with the subspace $\Ker(\theta_1^{i+1, 0})$ of $H^{i+1}_\mideal(R)_1$, the induced action of $\theta_2^{i+1, 1}$ is given by $\theta_2^{i+1, 1}(g_a^i-g_b^i)=(c_a-c_b)g^i\in H_\emptyset^i(\Delta)\cong H^{i+1}_\mideal(R^1)_1$. That is, $\theta_2^{i+1, 1}$ is an isomorphism as long as $c_a\not= c_b$ (note also that the singularities of $\Delta$ are generically isolated); this means that $H^i_\mideal(R^2)_1=0$. In summary:
\begin{equation}
H^i_\mideal(R^2)_j \cong \left\{\begin{array}{cc} H_\emptyset^{i+1}(\Delta) & j=2 \\ H_\emptyset^i(\Delta) & j= 0 \\ 0 & \text{otherwise.} \end{array}\right.
\end{equation}
Since $H_\mideal^i(R^2)_1=0$, is is immediate that $H_\mideal^i(R_2)$ is quasi-Buchsbaum for all $i$. The specific choice of $\theta_1$ was made for the ease of calculation. For sufficiently generic choices of $\theta_1$ and $\theta_2$, the same isomorphisms hold. In particular, it is evident that the singularities of $\Delta$ are generically isolated.

\section{Results}\label{results}

\subsection{Buchsbaumness}\label{Buchsbaumness}

We now move on to showing whether or not certain modules are Buchsbaum. For this, the following theorem (\cite[Theorem I.3.7]{StVo}) is vital.
\begin{theorem}\label{surjective} Let $\field$ be an infinite field, with $M$ a Noetherian graded $A$-module and $d:=\dim M>0$. Then $M$ is a Buchsbaum module if and only if the natural maps $\varphi_M^i:\Ext_A^i(\field, M)\to H_\mideal^i(M)$ are surjective for $i<d$.
\end{theorem}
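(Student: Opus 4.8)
The plan is to prove the two implications separately, after recasting $\varphi_M^i$ homologically. Since $A$ is a polynomial ring, the minimal free resolution of $\field$ is the Koszul complex $K_\bullet$ on the variables, so $\Ext_A^i(\field,M)=H^i(\Hom_A(K_\bullet,M))$, while $H_\mideal^i(M)$ is computed by the \v{C}ech complex $\check{C}^\bullet$ on the variables; the evident comparison map $\Hom_A(K_\bullet,M)\to\check{C}^\bullet\otimes_AM$ induces $\varphi_M^i$. One remark already shows the hypothesis refines quasi-Buchsbaumness: $\Ext_A^i(\field,M)$ is annihilated by $\mideal$, so $\im\varphi_M^i$ lies in the socle of $H_\mideal^i(M)$, and surjectivity of all $\varphi_M^i$ with $i<d$ forces $\mideal\cdot H_\mideal^i(M)=0$ for $i<d$. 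When this holds, $\varphi_M^i$ is the composite of the edge homomorphism $\Ext_A^i(\field,M)\to\Hom_A(\field,H_\mideal^i(M))$ of the Grothendieck spectral sequence $E_2^{p,q}=\Ext_A^p(\field,H_\mideal^q(M))\Rightarrow\Ext_A^{p+q}(\field,M)$ (arising from $\Hom_A(\field,-)=\Hom_A(\field,-)\circ\Gamma_\mideal$) with the now-tautological inclusion $\Hom_A(\field,H_\mideal^i(M))=(0:_{H_\mideal^i(M)}\mideal)\hookrightarrow H_\mideal^i(M)$. Hence, for $i<d$, surjectivity of $\varphi_M^i$ is equivalent to $\mideal H_\mideal^i(M)=0$ together with the vanishing of every differential leaving $E_r^{0,i}$; in spirit this says the complex $\mathrm{R}\Gamma_\mideal(M)$ is formal in cohomological degrees below $d$.

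For the forward implication I would invoke the structure theory of Buchsbaum modules: if $M$ is Buchsbaum then $H_\mideal^i(M)$ is a finite-dimensional $\field$-vector space for $i<d$ and, by Schenzel's analysis of the dualizing complex, the truncation complex of $M$ splits, i.e.\ $\mathrm{R}\Gamma_\mideal(M)\simeq\bigoplus_iH_\mideal^i(M)[-i]$ in the derived category of graded modules; this degenerates the spectral sequence above and yields surjectivity of $\varphi_M^i$ for $i<d$. Alternatively, avoiding dualizing complexes, one uses the classical fact---proved by induction on $d$ straight from the defining weak-sequence property---that for $M$ Buchsbaum and $\theta_1,\dots,\theta_d$ a general linear system of parameters the maps $H^i(\theta_1,\dots,\theta_d;M)\to H_\mideal^i(M)$ from Koszul cohomology are surjective for $i<d$; after a generic linear change of variables completing $\theta_1,\dots,\theta_d$ to a coordinate system $\theta_1,\dots,\theta_n$, the inclusion of the Koszul complex on $\theta_1,\dots,\theta_d$ into the full one realizes this map as $H^i(\theta_1,\dots,\theta_d;M)\to\Ext_A^i(\field,M)\xrightarrow{\varphi_M^i}H_\mideal^i(M)$, so $\varphi_M^i$ is surjective as well.

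For the reverse implication I would induct on $d$. The case $d=1$ is direct: $\varphi_M^0$ is the inclusion $(0:_M\mideal)\hookrightarrow\Gamma_\mideal(M)=H_\mideal^0(M)$, so its surjectivity says $\mideal\cdot\Gamma_\mideal(M)=0$; since a parameter $\theta$ necessarily avoids every (dimension-one) associated prime of $M/\Gamma_\mideal(M)$, we get $0:_M\theta=\Gamma_\mideal(M)$, which is exactly the weak-sequence condition $(0):_M\theta=(0):_M\mideal$, and conversely. For $d\ge2$, pick a general linear form $\theta$, a parameter with $(0:_M\theta)$ of finite length. Splicing the long exact sequences in $\Ext_A(\field,-)$ and in $H_\mideal(-)$ attached to $0\to(0:_M\theta)\to M\xrightarrow{\cdot\theta}\theta M\to0$ and $0\to\theta M\to M\to M/\theta M\to0$, and using naturality of $\varphi^\bullet$, one transfers surjectivity of the maps $\varphi_M^i$ ($i<d$) to surjectivity of the maps $\varphi_{M/\theta M}^i$ ($i<d-1$). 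The inductive hypothesis then makes $M/\theta M$ Buchsbaum of dimension $d-1$, and the usual reductions for the Buchsbaum property (it suffices to test the weak-sequence condition on filter-regular systems of parameters, plus prime avoidance to handle an arbitrary one) promote this to the conclusion that $M$ itself is Buchsbaum.

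The principal obstacle is this transfer step. Quasi-Buchsbaumness of $M$ only produces, for $i<d-1$, a short exact sequence $0\to H_\mideal^i(M)\to H_\mideal^i(M/\theta M)\to H_\mideal^{i+1}(M)\to0$ whose outer terms are killed by $\mideal$, so a priori $H_\mideal^i(M/\theta M)$ is merely killed by $\mideal^2$: it is exactly here that being quasi-Buchsbaum is not enough, and one must feed in the \emph{full} surjectivity of $\varphi_M^i$ and $\varphi_M^{i+1}$ (equivalently, the vanishing of the relevant spectral-sequence differentials for $M$) to conclude that this extension is annihilated by $\mideal$ and that $\varphi_{M/\theta M}^i$ is surjective. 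This is the technical heart of the St\"uckrad--Vogel argument, and it is precisely the slack between the quasi-Buchsbaum and Buchsbaum properties that the present paper exploits.
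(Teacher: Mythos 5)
The paper does not prove this theorem; it cites it verbatim from St\"uckrad--Vogel (Theorem I.3.7). Comparing your sketch against what a complete proof requires, there are two genuine gaps. First, in the "alternative" forward argument the arrow points the wrong way: applying $\Hom_A(-,M)$ to the inclusion of Koszul complexes $K_\bullet(\theta_1,\dots,\theta_d)\hookrightarrow K_\bullet(\theta_1,\dots,\theta_n)$ is contravariant and produces a map $\Ext_A^i(\field,M)=H^i(\theta_1,\dots,\theta_n;M)\to H^i(\theta_1,\dots,\theta_d;M)$, not the claimed $H^i(\theta_1,\dots,\theta_d;M)\to\Ext_A^i(\field,M)$. (Equivalently, the surjection $A/(\theta_1,\dots,\theta_d)\twoheadrightarrow\field$ induces this map.) Thus surjectivity of $H^i(\theta_1,\dots,\theta_d;M)\to H_\mideal^i(M)$ for a Buchsbaum $M$ does not factor through $\varphi_M^i$ in the direction you need, so this route does not close. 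You are left relying solely on Schenzel's splitting of the truncation complex, which is itself an equivalent characterization of Buchsbaumness rather than something strictly more elementary.

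Second, and more substantively, your reverse implication ($\varphi_M^i$ surjective $\Rightarrow$ Buchsbaum) defers rather than proves the inductive transfer: passing surjectivity of $\varphi_M^i$ for $i<d$ to surjectivity of $\varphi_{M/\theta M}^i$ for $i<d-1$. As you observe, quasi-Buchsbaumness of $M$ alone yields short exact sequences $0\to H_\mideal^i(M)\to H_\mideal^i(M/\theta M)\to H_\mideal^{i+1}(M)\to 0$ whose outer terms are killed by $\mideal$, so $H_\mideal^i(M/\theta M)$ is a priori killed only by $\mideal^2$; one must feed the full surjectivity of $\varphi_M^i$ and $\varphi_M^{i+1}$ through a nontrivial diagram chase to annihilate the extension and to rebuild $\varphi_{M/\theta M}^i$. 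That chase is the content of the theorem; calling it "the technical heart of the St\"uckrad--Vogel argument" is an accurate diagnosis, not a proof. It is precisely this delicacy---surjectivity and quasi-Buchsbaumness do not pass blindly to quotients by a single parameter---that the commutative diagrams and snake-lemma arguments elsewhere in the paper (in Section 4) have to manage by hand, so the step you leave unproved is exactly the one that carries all the weight.
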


Thus far we know some limited information about $\Ext_A^i(\field, R^j)$ and $H_\mideal^i(R^j)$ in terms of the simplicial cohomology of subcomplexes of $\Delta$. Thankfully, Miyazaki has furthered this correspondence with an explicit description of $\varphi_R^i$ in \cite[Corollary 4.5]{canonicalMap}.

\begin{theorem}\label{canonicalMap}The canonical map $\varphi_R^i:\Ext_A^i(\field, R)\to H_\mideal^i(R)$ corresponds to the identity map on $H_\emptyset^{i-1}(\Delta)$ in degree zero and to the direct sum of maps
	\[\bigoplus_{v\in V}\left(\varphi_v^i:H_\emptyset^{i-2}(\cost_\Delta v) \to H_\emptyset^{i-2}(\lk_\Delta v)\right)
	\]
induced by the inclusions in degree $-1$.
\end{theorem}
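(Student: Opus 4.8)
The plan is to compute the map $\varphi_R^i$ directly at the cochain level from the (stable) Koszul complexes, in the spirit of Reisner's computation of $H_\mideal^\bullet(\field[\Delta])$ (\cite{Reisner}) recalled in the Remark above. Write $\mathbf{x}=(x_v:v\in V)$. Since $\mathbf{x}$ is a regular sequence in $A$, the Koszul complex $K_\bullet(\mathbf{x})$ is a free resolution of $\field=A/\mathbf{x}$, so $\Ext_A^i(\field,R)=H^i(\Hom_A(K_\bullet(\mathbf{x}),R))$; and since the ideals $(x_v^\ell:v\in V)$ are cofinal with the powers $\mideal^\ell$, one also has $H_\mideal^i(R)=\varinjlim_\ell H^i(\Hom_A(K_\bullet(\mathbf{x}^\ell),R))$ (see \cite[Corollary 1]{characterizations}). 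Under these identifications $\varphi_R^i$ is the canonical map into the colimit, induced by the comparison chain maps $K_\bullet(\mathbf{x}^\ell)\to K_\bullet(\mathbf{x})$ lifting the surjections $A/(\mathbf{x}^\ell)\twoheadrightarrow A/\mathbf{x}$; after applying $\Hom_A(-,R)$ this becomes, on the rank-one summand indexed by $S\subseteq V$, multiplication by the monomial $\prod_{v\in S}x_v^{\ell-1}$. First I would pass to the fine $\ZZ^V$-grading. From $\Hom_A(K_j(\mathbf{x}),R)_{\mathbf{a}}=\bigoplus_{|S|=j}R_{\mathbf{a}+\mathbf{1}_S}$, together with the fine-graded forms of Theorems \ref{ExtStructure} and \ref{Grabe}, both $\Ext_A^i(\field,R)$ and $H_\mideal^i(R)$ are concentrated in the multidegrees $\mathbf{a}=\mathbf{0}$ and $\mathbf{a}=-\mathbf{e}_v$ ($v\in V$), so it suffices to identify $\varphi_R^i$ on those strands.

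In multidegree $\mathbf{0}$: for every $\ell\ge 1$ the $S$-summand of $\Hom_A(K_\bullet(\mathbf{x}^\ell),R)_{\mathbf{0}}$ is $R_{\ell\cdot\mathbf{1}_S}$, which is $\field$ exactly when $S\in\Delta$, and the (transposed Koszul) differential is, up to sign, multiplication by $x_u$ from the $S$-summand into the $(S\cup u)$-summand; hence this strand is the augmented simplicial cochain complex of $\Delta$, reindexed so that homological degree $j$ records faces of size $j$, with cohomology $\widetilde{H}^{i-1}(\Delta)=H_\emptyset^{i-1}(\Delta)$. All the transition maps are multiplication by monomials supported on $S$, hence the identity on these one-dimensional spaces, so the colimit equals this strand and $\varphi_R^i$ is the identity on $H_\emptyset^{i-1}(\Delta)$ in degree $0$.

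In multidegree $-\mathbf{e}_v$: the $S$-summand of $\Hom_A(K_\bullet(\mathbf{x}),R)_{-\mathbf{e}_v}$ is $R_{\mathbf{1}_S-\mathbf{e}_v}$, which vanishes unless $v\in S$ and, for $v\in S$, equals $R_{\mathbf{1}_{S\setminus v}}=\field$ iff $S\setminus v\in\Delta$; reindexing by $T:=S\setminus v$ identifies this strand with the augmented cochain complex of the subcomplex $\cost_\Delta v$ (the faces of $\Delta$ avoiding $v$), with cohomology $\widetilde{H}^{i-2}(\cost_\Delta v)=H_\emptyset^{i-2}(\cost_\Delta v)$. On the colimit side, for $\ell\ge 2$ the corresponding summand ($v\in S$) is $R_{\ell\mathbf{1}_S-\mathbf{e}_v}=\field$ iff $S\in\Delta$, that is, iff $T\in\lk_\Delta v$, with identity transition maps; so that strand is the augmented cochain complex of $\lk_\Delta v$, with cohomology $\widetilde{H}^{i-2}(\lk_\Delta v)=H_v^{i-1}(\Delta)$. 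Finally, on the $S$-summand ($v\in S$) the map $\varphi_R^i$ is multiplication by $x_S$ from $R_{\mathbf{1}_{S\setminus v}}$ to $R_{2\mathbf{1}_S-\mathbf{e}_v}$: it is the identity $\field\to\field$ when $S\in\Delta$ (that is, $T\in\lk_\Delta v$) and the zero map $\field\to 0$ when $S\setminus v\in\Delta$ but $S\notin\Delta$ (that is, $T\in\cost_\Delta v\setminus\lk_\Delta v$). But that is exactly the cochain restriction map along the inclusion $\lk_\Delta v\hookrightarrow\cost_\Delta v$, so in cohomology it induces the inclusion-induced map $H_\emptyset^{i-2}(\cost_\Delta v)\to H_\emptyset^{i-2}(\lk_\Delta v)$; taking the direct sum over $v\in V$ completes the description in degree $-1$.

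The routine-but-delicate part, and the one I would watch most carefully, is the bookkeeping: aligning the Koszul homological degree with the simplicial cohomological degree (the shift by $2$, and the augmentation terms coming from $S=\emptyset$ and $S=\{v\}$), keeping the Koszul signs consistent so that after reindexing the differential is literally the simplicial coboundary of $\cost_\Delta v$ or $\lk_\Delta v$, and checking that the colimit over $\ell$ genuinely stabilizes (at $\ell=2$) in each relevant multidegree so that it may be replaced by a single transition map. It is also worth noting that this argument uses nothing about $\Delta$ beyond purity — isolated singularities are not needed here — which matches the scope of the statement; the standing hypotheses of this section enter only when this description of $\varphi_R^i$ is subsequently fed into Theorem \ref{surjective}.
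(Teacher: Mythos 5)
The paper does not prove this statement; it is quoted verbatim from Miyazaki (\cite[Corollary 4.5]{canonicalMap}), so there is no internal argument to compare against. Your blind proof is correct and self-contained. You identify $\Ext_A^i(\field,R)$ with $H^i\Hom_A(K_\bullet(\mathbf{x}),R)$ and $H_\mideal^i(R)$ with the colimit over $\ell$ of $H^i\Hom_A(K_\bullet(\mathbf{x}^\ell),R)$, take fine $\ZZ^V$-grading, restrict to the only relevant multidegrees $\mathbf{0}$ and $-\mathbf{e}_v$, and track the comparison chain map $e_S^{(\ell)}\mapsto x_S^{\ell-\ell'}e_S^{(\ell')}$. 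I checked the two strands: in multidegree $\mathbf{0}$ both sides are the augmented cochain complex of $\Delta$ with identity transition maps, giving the identity on $H_\emptyset^{i-1}(\Delta)$; in multidegree $-\mathbf{e}_v$ the $\ell=1$ strand is the cochain complex of $\cost_\Delta v$ (shifted by $2$), the strands for $\ell\ge 2$ all stabilize at the cochain complex of $\lk_\Delta v$, and the transition $\ell=1\to\ell=2$ is exactly the restriction along $\lk_\Delta v\hookrightarrow\cost_\Delta v$, as you say. The degree arithmetic ($S$-summand at Koszul degree $j$ contributing to $\tilde C^{j-2}$ after removing $v$) lines up with Theorems \ref{ExtStructure} and \ref{Grabe}, and the sign bookkeeping is a global $\pm1$ per $v$ that does not affect the conclusion. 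Your observation that the argument needs nothing about $\Delta$ beyond purity (in fact not even that) is correct and matches the generality of Miyazaki's statement. This is essentially the argument that Miyazaki's paper carries out; the only thing you do not make explicit, and should if you were to write it up in full, is why $\varphi_R^i$ can be computed from the single transition $\ell=1\to\ell=2$ (namely, stabilization of the directed system at $\ell=2$ in each fine degree), which you correctly flag as a point to verify.
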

For our purposes, an alternate expression for $\varphi_R^i$ turns out to be even more powerful than the one above.
For some fixed $v$, consider the long exact sequence in simplicial cohomology for the triple $(\Delta, \cost_\Delta v, \emptyset)$. In our notation, it is written as
\begin{equation}\label{triple}
\cdots \to H_\emptyset^{i-2}(\Delta)\to H_\emptyset^{i-2}(\cost_\Delta v) \xrightarrow{\delta} H_v^{i-1}(\Delta)\xrightarrow{\iota_v^{i-1}} H_\emptyset^{i-1}(\Delta)\to\cdots.
\end{equation}
Under the isomorphism $H_v^{i-1}(\Delta)\cong H_\emptyset^{i-2}(\lk_\Delta v)$, a quick check shows that the connecting homomorphism $\delta$ in this sequence is equivalent to $\varphi_v^i$ in the theorem above (this is also made apparent in examining its proof). On the other hand, if we consider the cohomology modules above as components of $H_\mideal^{i}(R)$ as in Theorem \ref{Grabe}, then the $\iota_v^{i-1}$ map in this sequence is the same as the ``multiplication by $x_v$'' map on $H^i_\mideal(R)_v$. These equivalences along with the exactness of (\ref{triple}) allow us to deduce the following proposition.
\begin{proposition}\label{canonicalMapImage}
	The image of the $H_\emptyset^{i-2}(\cost_\Delta v)$ component of $\Ext_A^i(\field, R)_{-1}$ under the canonical map $\varphi_R^i:\Ext_A^i(\field, R)\to H_\mideal^i(R)$ is precisely the kernel of $\iota^{i-1}_v:H_v^{i-1}(\Delta)\to H_\emptyset^{i-1}(\Delta)$. In particular,
	\[
	(\im\varphi_R^i)_{-1} = K^i
	\]
	through the identifications
	\[
	\left(\im\varphi_R^i\right)_{-1} = \bigoplus_{v\in V}\left(\im\varphi_v^i\right) = \bigoplus_{v\in V} \left(\Ker\iota_v^{i-1}\right) = K^i.
	\]
\end{proposition}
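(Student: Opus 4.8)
The plan is to establish the displayed chain of identifications one link at a time, reading off each from a theorem already stated in the excerpt. First, by Theorem~\ref{ExtStructure} (Miyazaki), the degree $-1$ part of $\Ext_A^i(\field, R)$ is $\bigoplus_{v\in V} H_\emptyset^{i-2}(\cost_\Delta v)$, and by Theorem~\ref{canonicalMap} the restriction of $\varphi_R^i$ to this degree is the direct sum of the maps $\varphi_v^i : H_\emptyset^{i-2}(\cost_\Delta v) \to H_\emptyset^{i-2}(\lk_\Delta v)$. This immediately gives $\left(\im\varphi_R^i\right)_{-1} = \bigoplus_{v\in V}\left(\im\varphi_v^i\right)$, the first equality in the displayed chain. (One should note in passing that the target $\bigoplus_v H_\emptyset^{i-2}(\lk_\Delta v) \cong \bigoplus_v H_v^{i-1}(\Delta)$ is exactly the degree $-1$ part of $H_\mideal^i(R)$ by Theorem~\ref{Grabe}, so the map lands where it should.)

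Next I would carry out the ``quick check'' that the connecting homomorphism $\delta$ in the long exact cohomology sequence~(\ref{triple}) of the triple $(\Delta, \cost_\Delta v, \emptyset)$ agrees, under the identification $H_v^{i-1}(\Delta) \cong H_\emptyset^{i-2}(\lk_\Delta v)$, with $\varphi_v^i$. This is the content of the paragraph preceding the proposition, so for the proof of the proposition itself I can take it as given; the key consequence is that exactness of~(\ref{triple}) at the term $H_v^{i-1}(\Delta)$ yields
\[
\im\varphi_v^i \;=\; \im\delta \;=\; \Ker\!\left(\iota_v^{i-1} : H_v^{i-1}(\Delta) \to H_\emptyset^{i-1}(\Delta)\right).
\]
This is the substantive step and establishes the middle equality $\bigoplus_{v\in V}\left(\im\varphi_v^i\right) = \bigoplus_{v\in V}\left(\Ker\iota_v^{i-1}\right)$, proving the first assertion of the proposition.

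Finally, the last equality $\bigoplus_{v\in V}\left(\Ker\iota_v^{i-1}\right) = K^i$ is just the definition of $K^i$ recorded in Section~\ref{Algebra} (and the identification of $\Ker\iota_v^{i-1}$ with $\Ker(\cdot x_v : H_\mideal^i(R)_v \to H_\mideal^i(R))$ is exactly the ``geometric'' $A$-module structure of Theorem~\ref{Grabe}, under which the map $\iota_v^{i-1}$ is multiplication by $x_v$). Concatenating the three equalities gives $\left(\im\varphi_R^i\right)_{-1} = K^i$, and since $\left(\im\varphi_R^i\right)_0$ is the image of the identity on $H_\emptyset^{i-1}(\Delta)$ by Theorem~\ref{canonicalMap} (hence all of degree $0$, which plays no role here), the displayed chain of identifications follows.

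The only real obstacle is the verification that $\delta$ in~(\ref{triple}) coincides with $\varphi_v^i$ — i.e., that the connecting map of the triple, transported through $H_v^{i-1}(\Delta) \cong \tilde H^{i-2}(\lk_\Delta v)$, is the map on cohomology induced by the inclusion $\lk_\Delta v \hookrightarrow \cost_\Delta v$. This is a standard but slightly fiddly diagram chase with the long exact sequences of the pairs $(\Delta, \cost_\Delta v)$ and $(\cost_\Delta v \cup \st_\Delta v, \cost_\Delta v) \simeq (\st_\Delta v, \lk_\Delta v)$ together with excision; since the excerpt already asserts this equivalence (``a quick check shows\ldots this is also made apparent in examining its proof''), in the proof of the proposition I would simply cite that discussion and Theorem~\ref{canonicalMap} rather than redo it. Everything else is bookkeeping.
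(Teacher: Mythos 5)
Your argument is correct and follows the same route the paper takes: the paper's proof consists precisely of the discussion preceding the proposition (Theorem~\ref{canonicalMap} giving $(\varphi_R^i)_{-1}=\bigoplus_v\varphi_v^i$, the identification of $\varphi_v^i$ with the connecting map $\delta$ in the long exact sequence~(\ref{triple}), exactness at $H_v^{i-1}(\Delta)$ yielding $\im\delta=\Ker\iota_v^{i-1}$, and the definition of $K^i$). Your decision to cite rather than re-derive the identification $\delta\cong\varphi_v^i$ matches the paper's own treatment.
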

Note that if $\theta$ is any linear form then the proposition immediately implies that \linebreak $(\im \varphi_R^i)_{-1}\subseteq \Ker\theta^{i, 0}$. Examining this containment more closely provides a characterization of the Buchsbaumness of $R^1$. Before stating this characterization, we note that one commutative diagram in particular will be used repeatedly in proving many of our results. Here we explain its origin.

\begin{construction}\label{diagramConstruction}
The short exact sequence (\ref{S1}) induces the following commutative diagram of vector spaces with exact rows:
\[\adjustbox{scale=.90}{%
	\begin{tikzcd}
	\Ext_A^i(\field, R)_{j-1} \arrow{r}{\theta_1^{i, j}}\arrow{d}{\varphi_R^i}& \Ext_A^i(\field, R)_j \arrow{r}{\pi}\arrow{d}{\varphi_R^i}& \Ext_A^i(\field, R^1)_j \arrow{r}{\delta}\arrow{d}{\varphi_{R^1}^i}& \Ext_A^{i+1}(\field, R)_{j-1} \arrow{r}{\theta_1^{i+1, j}}\arrow{d}{\varphi_R^{i+1}}& \Ext_A^{i+1}(\field, R)_j \arrow{d}{\varphi_R^{i+1}}\\
	H_\mideal^i(R)_{j-1} \arrow{r}{\theta_1^{i, j}}& H_\mideal^i(R)_j\arrow{r}{\pi} & H_\mideal^i(R^1)_j \arrow{r}{\delta}& H_\mideal^{i+1}(R)_{j-1} \arrow{r}{\theta_1^{i+1, j}}& H_\mideal^i(R)_j.
\end{tikzcd}
}\]
Since $\theta_1$ is the beginning of a regular sequence for $R$, it acts trivially on $\Ext_A^i(A/\mideal^l, R)$ for all $i$ and $j$ (see, e.g., \cite[p.~272]{HerzogHibi}). By the commutativity of the rightmost square, the image of $\Ext_A^{i+1}(\field, R)_{j-1}$ under $\varphi_R^{i+1}$ must lie in $\Ker\theta_1^{i+1, j}$. On the other hand, the exactness of the bottom row tells us that $\pi:H_\mideal^i(R)_j\to H_\mideal^i(R^1)_j$ factors through the projection $H_\mideal^i(R)_j\to \Coker \theta_1^{i, j}$. As this does not alter the commutativity of the diagram, we can now alter it so that the top and bottom rows are both short exact sequences as follows
\[
	\begin{tikzcd}
	0 \arrow{r}& \Ext_A^i(\field, R)_j \arrow{r}\arrow{d}{}& \Ext_A^i(\field, R^1)_j \arrow{r}\arrow{d}{\varphi_{R^1}^i}& \Ext_A^{i+1}(\field, R)_{j-1} \arrow{r}\arrow{d}{\varphi_R^{i+1}}& 0\\
	0 \arrow{r}& \Coker\theta_1^{i, j}\arrow{r} & H_\mideal^i(R^1)_j \arrow{r}& \Ker\theta_1^{i+1, j} \arrow{r}& 0,
	\end{tikzcd}\]
where the left vertical map is the composition of $\varphi_R^i$ with the projection $H_\mideal^i(R)_j\to\Coker\theta_1^{i, j}$. We can repeat this construction starting with the short exact sequence (\ref{S2}), yielding the same diagram as above with $R$, $R^1$, and $\theta_1$ replaced by $R^1$, $R^2$, and $\theta_2$, respectively.
\end{construction}
Our first use of this construction will be in proving the following proposition (an alternate proof of the ``if'' direction also appears in \cite[Lemma 4.3(2)]{NS-sing}).

\begin{proposition}\label{homIsoIsBuchs}
	If $\Delta$ has isolated singularities, then $R^1$ is Buchsbaum if and only if the singularities of $\Delta$ are homologically isolated.
\end{proposition}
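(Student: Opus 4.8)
The plan is to use Theorem~\ref{surjective}, which says that $R^1$ is Buchsbaum if and only if $\varphi_{R^1}^i$ is surjective for all $i < \dim R^1 = d-1$. Since all of the relevant cohomology is concentrated in degrees $0$ and $1$ (by \eqref{S1cohomology}), and in degree $1$ we have $H_\mideal^i(R^1)_1 \cong H_\emptyset^i(\Delta)$, matching $\Ext_A^i(\field, R^1)_1 = 0$... wait, that's not right either. Let me restate. Since everything of interest lives in degrees $-1$ and $0$ for $\Ext$ and degrees $0$ and $1$ for local cohomology, I would first sort out which graded pieces are in play and reduce surjectivity of $\varphi_{R^1}^i$ to a statement in a single degree. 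The key tool is the second commutative diagram of Construction~\ref{diagramConstruction} (applied with $j = 0$), whose rows are the short exact sequences
\[
0 \to \Ext_A^i(\field, R)_0 \to \Ext_A^i(\field, R^1)_0 \to \Ext_A^{i+1}(\field, R)_{-1} \to 0
\]
and
\[
0 \to \Coker\theta_1^{i,0} \to H_\mideal^i(R^1)_0 \to \Ker\theta_1^{i+1,0} \to 0,
\]
with vertical maps the projection-composed $\varphi_R^i$ on the left, $\varphi_{R^1}^i$ in the middle, and $\varphi_R^{i+1}$ on the right.

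Next I would analyze the two outer vertical maps. The left map is $\varphi_R^i$ followed by the projection $H_\mideal^i(R)_0 \to \Coker\theta_1^{i,0}$; by Theorem~\ref{canonicalMap}, $\varphi_R^i$ is the identity on $H_\emptyset^{i-1}(\Delta) = H_\mideal^i(R)_0$ in degree $0$, so this composite is just the surjection onto the cokernel, hence surjective. For the right map, Proposition~\ref{canonicalMapImage} identifies $(\im\varphi_R^{i+1})_{-1} = K^{i+1} = \bigoplus_v \Ker\iota_v^i$ inside $H_\mideal^{i+1}(R)_{-1} = \bigoplus_v H_v^i(\Delta)$. Meanwhile $\Ker\theta_1^{i+1,0}$ is the kernel of the map $\bigoplus_v H_v^i(\Delta) \to H_\emptyset^i(\Delta)$ that (since all coefficients of $\theta_1$ are nonzero) sends $\alpha_v \mapsto \sum_v \iota_v^i(\alpha_v)$ up to scalars — i.e., the kernel of the ``sum of $\iota_v^i$'' map. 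So the right vertical map $\varphi_R^{i+1}$ is surjective onto $\Ker\theta_1^{i+1,0}$ precisely when $\bigoplus_v \Ker\iota_v^i = \Ker(\sum_v \iota_v^i)$, which is exactly the definition of homologically isolated singularities (reformulated in Section~\ref{Algebra} as the equality $K^{i+1} = \Ker(\sum_v \cdot x_v)$).

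Finally I would run the snake lemma / five-lemma-type diagram chase. Since the left vertical map is always surjective and the rows are short exact, a standard diagram chase gives: $\varphi_{R^1}^i$ is surjective (in degree $0$) if and only if $\varphi_R^{i+1}$ is surjective onto $\Ker\theta_1^{i+1,0}$. Combining with the degree bookkeeping from the first step — one must check that in every other relevant degree $\varphi_{R^1}^i$ is automatically surjective (in degree $1$, $H_\mideal^i(R^1)_1 \cong H_\emptyset^i(\Delta)$ and one checks $\varphi$ is onto there, or that this degree contributes nothing to Buchsbaumness in the range $i < d-1$) — this yields: $R^1$ is Buchsbaum $\iff$ $\varphi_R^{i+1}$ surjects onto $\Ker\theta_1^{i+1,0}$ for all relevant $i$ $\iff$ $\Delta$ has homologically isolated singularities. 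I expect the main obstacle to be the degree bookkeeping in the first and last steps: being careful about exactly which graded components and which values of $i$ must be checked, handling the top-degree and degree-$0$ edge cases of Theorem~\ref{surjective} correctly, and making sure that the degree-$1$ piece $H_\mideal^i(R^1)_1$ does not secretly obstruct surjectivity (it should not, since it comes from $H_\emptyset^i(\Delta)$ which is also an $\Ext$ summand and maps on identically — but this needs to be said carefully, perhaps by invoking the $\ZZ$-graded refinement of Theorem~\ref{surjective} degree by degree). The core algebraic content — the equivalence between surjectivity of $\varphi_R^{i+1}$ onto $\Ker\theta_1^{i+1,0}$ and homological isolation — is essentially immediate from Proposition~\ref{canonicalMapImage} once the diagram is set up.
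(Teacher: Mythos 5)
Your proposal follows the same route as the paper's proof: the diagram from Construction~\ref{diagramConstruction} with $j=0$, the observation that the left vertical map is automatically surjective, the identification via Proposition~\ref{canonicalMapImage} that surjectivity of $\varphi_R^{i+1}$ onto $\Ker\theta_1^{i+1,0}$ is exactly homological isolation, and the snake lemma to transfer this to $\varphi_{R^1}^i$. The degree-$1$ check you flag as a concern is handled in the paper by shifting the same diagram up one degree and noting $\Ext_A^i(\field,R)_1=\Coker\theta_1^{i,1}=0$, so that $\varphi_{R^1}^i$ in degree $1$ is conjugate to the isomorphism $\varphi_R^{i+1}$ in degree $0$; your instinct that this works out is correct, and the rest of the argument matches.
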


\begin{proof}
	Construction \ref{diagramConstruction} provides the following diagram:
	\[
	\begin{tikzcd}
	0 \arrow{r}& \Ext_A^i(\field, R)_0 \arrow{r}\arrow{d}& \Ext_A^i(\field, R^1)_0 \arrow{r}\arrow{d}{\varphi_{R^1}^i}& \Ext_A^{i+1}(\field, R)_{-1} \arrow{r}\arrow{d}{\varphi_R^{i+1}}& 0 \\
	0 \arrow{r}& \Coker \theta_1^{i, 0}\arrow{r} & H_\mideal^i(R^1)_0 \arrow{r}& \Ker \theta_1^{i+1, 0}\arrow{r} & 0.
	\end{tikzcd}
	\]
	By definition, if $\Delta$ contains singularities that are not homologically isolated then there exists some $i$ such that $K^{i+1}\subsetneq \Ker\theta_1^{i+1, 0}$. By Proposition \ref{canonicalMapImage}, this implies that the $\varphi_R^{i+1}$ map appearing in the diagram above is not a surjection. Since $\varphi_R^i:\Ext_A^i(\field, R)_0\to H_\mideal^i(R)_0$ is an isomorphism, the left vertical map is always a surjection. Then the snake lemma applied to this diagram shows that $\varphi_{R^1}^i$ is not a surjection, so $R^1$ is not Buchsbaum by Theorem \ref{surjective}.
	
	Conversely, if the singularities of $\Delta$ are homologically isolated, then $\Ker\theta_1^{i+1, 0}=K^{i+1}$ for all $i$, so that the $\varphi_R^{i+1}$ map in the diagram is always a surjection. The snake lemma now shows that $\varphi^i_{R^1}$ is a surjection in degree $0$. In degree $1$, we only need to raise the degrees in the previous diagram by one. The diagram simplifies to
		\[
		\begin{tikzcd}
		0 \arrow{r}&  \Ext_A^i(\field, R^1)_1 \arrow{r}\arrow{d}{\varphi_{R^1}^i}& \Ext_A^{i+1}(\field, R)_{0} \arrow{r}\arrow{d}{\varphi_R^{i+1}}& 0 \\
		0 \arrow{r}& H_\mideal^i(R^1)_1 \arrow{r}& H_\mideal^{i+1}(R)_0 \arrow{r}& 0,
		\end{tikzcd}
		\]
	because $\Ext_A^i(\field, R)_1=\Coker \theta_1^{i, 1}=0$. Since $\varphi_R^{i+1}$ is an isomorphism is degree $0$, this completes the proof.
\end{proof}

We have now seen that spaces with homologically isolated singularities are ``close'' to being Buchsbaum in that $R^1$ is Buchsbaum. It is natural to ask whether descending to $R^2$ could always provide a Buchsbaum module, even in the non-homologically-isolated case. This is not true, as exhibited by the following proposition.

\begin{proposition}\label{AlmostBuchsExamples}Suppose $\Delta$ is a space with non-homologically-isolated singularities and that there exists $i$ such that $H_\emptyset^{i-1}(\Delta)=0$, while $\Ker \theta_1^{i+1, 0}\not=0$ and $\iota_v^i$ is injective for all $v$. Then $R^2$ is not Buchsbaum.
\end{proposition}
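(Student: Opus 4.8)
The plan is to exhibit a cohomological degree in which the canonical map $\varphi_{R^2}^i:\Ext_A^i(\field,R^2)\to H_\mideal^i(R^2)$ fails to be surjective, and then invoke Theorem~\ref{surjective}. Concretely, I would work in internal degree $0$ and show that $H_\mideal^i(R^2)_0\neq0$ while $\Ext_A^i(\field,R^2)_0=0$. The nonvanishing of $H_\mideal^i(R^2)_0$ is read off immediately from~(\ref{S2cohomology}): there $H_\mideal^i(R^2)_0\cong\Coker\theta_1^{i,0}\oplus\Ker\theta_1^{i+1,0}$, and since $\Coker\theta_1^{i,0}$ is a quotient of $H_\emptyset^{i-1}(\Delta)=0$ while $\Ker\theta_1^{i+1,0}\neq0$ by hypothesis, we get $H_\mideal^i(R^2)_0\cong\Ker\theta_1^{i+1,0}\neq0$.

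The vanishing of $\Ext_A^i(\field,R^2)_0$ is the real content, and it rests on the purely topological claim that $H_\emptyset^{i-1}(\cost_\Delta v)=0$ for every $v\in V$. To see this, consider the portion $H_\emptyset^{i-1}(\Delta)\to H_\emptyset^{i-1}(\cost_\Delta v)\to H_v^i(\Delta)\xrightarrow{\iota_v^i}H_\emptyset^i(\Delta)$ of the long exact sequence~(\ref{triple}) of the triple $(\Delta,\cost_\Delta v,\emptyset)$: the first term vanishes by hypothesis, so the map $H_\emptyset^{i-1}(\cost_\Delta v)\to H_v^i(\Delta)$ is injective, but its image equals $\Ker\iota_v^i$, which is $0$ because $\iota_v^i$ is injective by hypothesis. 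Hence $H_\emptyset^{i-1}(\cost_\Delta v)=0$.

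Granting this, Miyazaki's structure theorem (Theorem~\ref{ExtStructure}) gives $\Ext_A^i(\field,R)_0=H_\emptyset^{i-1}(\Delta)=0$, $\Ext_A^{i+1}(\field,R)_{-1}=\bigoplus_{v}H_\emptyset^{i-1}(\cost_\Delta v)=0$, and $\Ext_A^{i+2}(\field,R)_{-2}=0$. Peeling off $\theta_1$ via Construction~\ref{diagramConstruction}, the short exact sequences $0\to\Ext_A^n(\field,R)_j\to\Ext_A^n(\field,R^1)_j\to\Ext_A^{n+1}(\field,R)_{j-1}\to0$ with $(n,j)=(i,0)$ and $(n,j)=(i+1,-1)$ then force $\Ext_A^i(\field,R^1)_0=0$ and $\Ext_A^{i+1}(\field,R^1)_{-1}=0$; peeling off $\theta_2$ in the same way, the sequence $0\to\Ext_A^i(\field,R^1)_0\to\Ext_A^i(\field,R^2)_0\to\Ext_A^{i+1}(\field,R^1)_{-1}\to0$ yields $\Ext_A^i(\field,R^2)_0=0$, as desired.

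Putting the two computations together, $\varphi_{R^2}^i$ in degree $0$ is a map out of $0$ onto $\Ker\theta_1^{i+1,0}\neq0$, hence not surjective, so Theorem~\ref{surjective} shows $R^2$ is not Buchsbaum---granting that $i$ falls in the range $0\le i<\dim R^2=d-2$ for which that criterion is decisive, a point I would verify separately (the hypotheses force $1\le i\le d-2$, since $\Ker\theta_1^{i+1,0}\neq0$ makes some $H_v^i(\Delta)\neq0$, and $i<d-2$ in the cases of interest such as suspensions). The one step that is not bookkeeping is the vanishing $H_\emptyset^{i-1}(\cost_\Delta v)=0$: this is precisely the bridge by which the topological hypotheses on the singular vertices of $\Delta$ enter, and everything else follows by combining it with Construction~\ref{diagramConstruction}, Theorem~\ref{ExtStructure}, and~(\ref{S2cohomology}).
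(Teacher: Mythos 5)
Your proof is correct and is, at bottom, the same argument as the paper's: both isolate internal degree $0$, both derive $H_\emptyset^{i-1}(\cost_\Delta v)=0$ for every $v$ from the exactness of the sequence for the triple $(\Delta,\cost_\Delta v,\emptyset)$ together with $H_\emptyset^{i-1}(\Delta)=0$ and the injectivity of $\iota_v^i$, and both then peel off $\theta_1$ and $\theta_2$ via Construction~\ref{diagramConstruction} before invoking Theorem~\ref{surjective}. The one genuine difference is cosmetic but worth noting: the paper argues that $\varphi_{R^2}^i$ is the \emph{zero map} in degree $0$ by chasing two commutative squares, whereas you show directly that the \emph{domain} $\Ext_A^i(\field,R^2)_0$ already vanishes, which makes the non-surjectivity immediate. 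In the setting at hand these are equivalent, since $\Ext_A^i(\field,R)_0\cong H_\emptyset^{i-1}(\Delta)=0$ forces the paper's $\Ext_A^i(\field,R^2)_0$ to be zero as well; your route simply makes that vanishing explicit. You are also a bit more careful than the paper in one small spot: to get $\Ext_A^{i+1}(\field,R^1)_{-1}=0$ one needs both $\Ext_A^{i+1}(\field,R)_{-1}=0$ and $\Ext_A^{i+2}(\field,R)_{-2}=0$; the paper states only the former (the latter is of course automatic from Theorem~\ref{ExtStructure}, as you observe). Finally, your parenthetical about needing $i<\dim R^2=d-2$ for Theorem~\ref{surjective} to apply is a legitimate point that the paper leaves implicit; it is satisfied in all the intended applications (in particular the suspensions of Corollary~\ref{SuspensionsAlmostBuchs}), but it would be tidier to flag it in the statement of the proposition.
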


\begin{proof}The hypotheses combined with the exact sequence in (\ref{triple}) show that $H_\emptyset^{i-1}(\cost v)=0$, so that $\Ext_A^{i+1}(\field, R)_{-1}=0$. Also, $\Coker \theta_1^{i, 0}=0$ because $H_\emptyset^{i-1}(\Delta)=0$. Then the diagram of Construction \ref{diagramConstruction} can be filled in as follows
	\[
	\begin{tikzcd}
	0 \arrow{r}& \Ext_A^i(\field, R)_0 \arrow{r}\arrow{d}& \Ext_A^i(\field, R^1)_0 \arrow{r}\arrow{d}{\varphi_{R^1}^i}& 0 \arrow{r}\arrow{d}& 0 \\
	0 \arrow{r}& 0\arrow{r} & H_\mideal^i(R^1)_0 \arrow{r}& \Ker \theta_1^{i+1, 0}\arrow{r} & 0.
	\end{tikzcd}
	\]
	This demonstrates that $\varphi_{R^1}^i$ is the zero map in degree $0$. Now repeat this argument using $R^1$ and $R^2$ instead of $R$ and $R^1$. In this case, $\Ext_A^{i+1}(\field, R^1)_{-1}=0$ because $\Ext_A^{i+1}(\field, R)_{-1}=0$. Since $H_\mideal^i(R^1)_{-1}=0$, the diagram provided is
	\[
	\begin{tikzcd}
	\Ext_A^i(\field, R^1)_0 \arrow{r}{\sim}\arrow{d}{\varphi_{R^1}^i}& \Ext_A^i(\field, R^2)_0 \arrow{d}{\varphi_{R^2}^i} \\
	 H_\mideal^i(R^1)_0\arrow{r}{\sim} &  H_\mideal^i(R^2)_0,
	\end{tikzcd}
	\]
	showing that $\varphi_{R^2}^i$ is the zero map in degree $0$ as well. Since $H_\mideal^i(R^2)_0\not=0$, Theorem \ref{surjective} completes the proof.
\end{proof}

The hypotheses of this proposition may seem fairly restrictive, but that is not necessarily the case. In fact, choosing $i$ to be the least $i$ such that $H_\emptyset^i(\Delta)\not=0$ when $\Delta$ is a triangulation of the suspension of a manifold that is not a homology sphere will always do the trick, yielding the following Corollary:

\begin{corollary}\label{SuspensionsAlmostBuchs}If $\Delta$ is the suspension of a Buchsbaum complex that is not Cohen-Macaulay, then $R^2$ is not Buchsbaum.
\end{corollary}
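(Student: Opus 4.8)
The plan is to realize $\Delta$ as a complex to which Proposition~\ref{AlmostBuchsExamples} applies. Write $\Gamma$ for the given Buchsbaum, non--Cohen--Macaulay complex, with vertex set $W$ and $\dim\Gamma=d-2$, and take $\Delta:=\Gamma*\{a,b\}$ to be the simplicial suspension, on vertex set $V=W\sqcup\{a,b\}$, so that $\dim\Delta=d-1$. (For the more general statement of the introduction, with $\Delta$ an arbitrary triangulation of $|\Sigma\Gamma|$, the argument below carries over essentially unchanged once one observes that the singular locus of such a $\Delta$ consists of the two apex vertices, whose links triangulate $|\Gamma|$.) First I would confirm that $\Delta$ satisfies the standing hypotheses of Section~\ref{calculations}, namely that it is connected with isolated singularities. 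Connectedness is immediate, since $a$ lies in every facet. Let $F\in\Delta$ with $F\notin\{\emptyset,\{a\},\{b\}\}$. If $a\in F$ then $b\notin F$ (because $\{a,b\}\notin\Delta$) and $F\setminus\{a\}$ is a nonempty face of $\Gamma$, so $\lk_\Delta F=\lk_\Gamma(F\setminus\{a\})$ is Cohen--Macaulay, links of nonempty faces of a Buchsbaum complex being Cohen--Macaulay; the case $b\in F$ is symmetric; and if $a,b\notin F$ then $F$ is a nonempty face of $\Gamma$ and $\lk_\Delta F=\lk_\Gamma F*\{a,b\}$ is the suspension of the Cohen--Macaulay complex $\lk_\Gamma F$, hence Cohen--Macaulay. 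Thus every singular face of $\Delta$ is one of $\{a\}$ and $\{b\}$, and the singularities of $\Delta$ are isolated.

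Next I would choose the index $i$. Since $\cost_\Delta a=\{b\}*\Gamma$ and $\cost_\Delta b=\{a\}*\Gamma$ are cones, they are acyclic, so the long exact sequence~(\ref{triple}) for the triples $(\Delta,\cost_\Delta a,\emptyset)$ and $(\Delta,\cost_\Delta b,\emptyset)$ forces $\iota_a^i$ and $\iota_b^i$ to be isomorphisms for every $i$; in particular $\im\iota_a^i=\im\iota_b^i=H_\emptyset^i(\Delta)$. As $\Gamma$ is not Cohen--Macaulay, $\emptyset$ is a singular face of $\Gamma$, so there is a minimal $j_0$ with $\tilde{H}^{j_0}(\Gamma)\neq0$, and $0\le j_0<\dim\Gamma=d-2$ (it is $\ge0$ because $\tilde{H}^{-1}(\Gamma)=0$). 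Put $i:=j_0+1$, so that $1\le i<d-1$ and $H_\emptyset^i(\Delta)\cong\tilde{H}^{j_0}(\Gamma)\neq0$; since $H_a^i(\Delta)\cong\tilde{H}^{i-1}(\Gamma)\neq0$ with $i<d-1$, this also shows $a$ (and $b$) is singular. Finally, the singularities of $\Delta$ are not homologically isolated: because $\iota_a^i$ and $\iota_b^i$ are isomorphisms onto the same nonzero space $H_\emptyset^i(\Delta)$, for $0\neq g\in H_\emptyset^i(\Delta)$ the element $\bigl((\iota_a^i)^{-1}(g),\,-(\iota_b^i)^{-1}(g)\bigr)$ is a nonzero element of $\Ker\bigl(\sum_{v}\iota_v^i\bigr)$, while $\bigoplus_{v}\Ker\iota_v^i=0$.

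It then remains to verify the three remaining hypotheses of Proposition~\ref{AlmostBuchsExamples} at this $i$. $(1)$ $H_\emptyset^{i-1}(\Delta)\cong\tilde{H}^{j_0-1}(\Gamma)=0$, by minimality of $j_0$ (with $\tilde{H}^{-1}(\Gamma)=0$ when $j_0=0$). $(2)$ $\iota_v^i$ is injective for every $v\in V$: it is an isomorphism for $v\in\{a,b\}$, and for $v\in W$ the vertex $v$ is nonsingular and $i<d-1$, so $H_v^i(\Delta)=0$ and $\iota_v^i$ is trivially injective. $(3)$ $\Ker\theta_1^{i+1,0}\neq0$: by Theorem~\ref{Grabe}, $\theta_1^{i+1,0}$ is identified with the map $\sum_{v}c_v\iota_v^i\colon\bigoplus_{v}H_v^i(\Delta)\to H_\emptyset^i(\Delta)$, where $\theta_1=\sum_{v}c_vx_v$ has all $c_v\neq0$; its source equals $H_a^i(\Delta)\oplus H_b^i(\Delta)$ (the $W$-summands vanish), of dimension $2\dim_\field H_\emptyset^i(\Delta)$, and the map is surjective (already on the $H_a^i$ summand), so its kernel is nonzero. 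Proposition~\ref{AlmostBuchsExamples} then yields that $R^2=\field[\Delta]/(\theta_1,\theta_2)\field[\Delta]$ is not Buchsbaum.

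The only step requiring genuine care is the choice of $i$: one must land it in the window $0\le i<d-1$ where simultaneously $H_\emptyset^i(\Delta)\neq0$ and $H_\emptyset^{i-1}(\Delta)=0$, and it is exactly here that non--Cohen--Macaulayness of $\Gamma$ (giving the first condition) and minimality of $j_0$ (giving the second) enter. Everything else is bookkeeping with suspensions, since the real content has been isolated into Proposition~\ref{AlmostBuchsExamples}.
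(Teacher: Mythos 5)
Your proof is correct and takes the same route as the paper: the paper's own ``proof'' is the single sentence preceding the corollary, which observes that the least $i$ with $H_\emptyset^i(\Delta)\neq 0$ satisfies all the hypotheses of Proposition~\ref{AlmostBuchsExamples}, and you spell that verification out carefully. One small slip: you justify connectedness by saying ``$a$ lies in every facet,'' but that is false — the facets of $\Gamma * \{a,b\}$ come in pairs $G\cup\{a\}$ and $G\cup\{b\}$, and $\{a,b\}$ is not a face. The conclusion still holds (any vertex $w$ of $\Gamma$ satisfies $\{w,a\},\{w,b\}\in\Delta$, so everything is connected through $W$), but the stated reason should be fixed.
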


\subsection{Almost Buchsbaumness}

Although these $R^2$ modules are not guaranteed to be Buchsbaum when $\Delta$ has non-homologically-isolated singularities, they are ``close'' to being Buchsbaum in some interesting ways and share some of the same properties. The examples above in which $R^2$ is not Buchsbaum fail the criterion of Theorem \ref{surjective} in the degree $0$ piece of $H_\mideal^i(R^2)$. Theorem \ref{surjectivityThm} asserts that (in the generically isolated case) this is the only possible obstruction to satisfying Theorem \ref{surjective}, and we now present its proof.

\begin{proof}[Proof of Theorem \ref{surjectivityThm}] : By the calculations in Section \ref{calculations}, we only need to verify surjectivity in degrees $1$ and $2$. The last diagram in the proof of Proposition \ref{homIsoIsBuchs} holds regardless of whether or not the singularities of $\Delta$ are homologically isolated. Hence, $\varphi_{R^1}^i$ is an isomorphism in degree $1$. Construction \ref{diagramConstruction} then induces the diagram 
		\[
		\begin{tikzcd}
		0 \arrow{r}&  \Ext_A^i(\field, R^2)_2 \arrow{r}\arrow{d}{\varphi_{R^2}^i}& \Ext_A^{i+1}(\field, R^1)_{1} \arrow{r}\arrow{d}{\varphi_{R^1}^{i+1}}& 0 \\
		0 \arrow{r}& H_\mideal^i(R^2)_2 \arrow{r}& H_\mideal^{i+1}(R^1)_1 \arrow{r}& 0,
		\end{tikzcd}
		\]
so that $\varphi_{R^2}^i$ is always an isomorphism in degree $2$.

It remains to show that $\varphi_{R^2}^i$ is a surjection in degree $1$. As usual, we have the following diagram.
\[
	\begin{tikzcd}
		0 \arrow{r}& \Ext_A^{i-1}(\field, R^1)_1 \arrow{r}\arrow{d}& \Ext_A^{i-1}(\field, R^2)_1 \arrow{r}\arrow{d}{\varphi_{R^2}^{i-1}}& \Ext_A^{i}(\field, R^1)_{0} \arrow{r}\arrow{d}& 0 \\
		0 \arrow{r}& \Coker \theta_2^{i-1, 1}\arrow{r} & H_\mideal^{i-1}(R^2)_1 \arrow{r}& \Ker \theta_2^{i, 1}\arrow{r} & 0.
	\end{tikzcd}
\]
According to the previous paragraph, the left vertical map must be a surjection. If we can show that the right vertical map is a surjection as well, then the proof will be complete. The right map is obtained by restricting the range of $\varphi_{R^1}^{i}$ to the subspace $\Ker\theta_2^{i, 1}$ of $H_\mideal^{i}(R^1)_0$. Note that the failure of $\varphi_{R^1}^{i}$ to be a surjection in this degree is precisely what made $R^1$ fail to be Buchsbaum in the non-homologically-isolated case.

Now consider a larger commutative diagram, all of whose rows are exact. All vertical maps are those induced by the action of $\theta_2$, and all maps from the back ``panel'' to the front are induced by the canonical maps $\varphi_{R^j}^i$.
\[
\adjustbox{scale=.65}{%
	\begin{tikzcd}
		&
		0
		\ar{rr}
		& & \Ext_A^i(\field, R)_0
		\ar{rr}
		\ar{dd}{}
		\ar{dl}[swap, sloped, near start]{}
		& & \Ext_A^i(\field, R^1)_0
		\ar{rr}
		\ar{dd}{}
		\ar{dl}[swap, sloped, near start]{}
		& & \Ext_A^{i+1}(\field, R)_{-1}
		\ar{rr}
		\ar{dd}{}
		\ar{dl}[swap, sloped, near start]{}
		& & 0
		\\
		0
		\ar[crossing over]{rr}[near start]{}
		& & \Coker\theta_1^{i, 0}
		\ar[crossing over]{rr}
		& & H_\mideal^i(R^1)_0
		\ar[crossing over]{rr}
		& & \Ker \theta_1^{i+1, 0}
		\ar[crossing over]{rr}
		& & 0
		\\
		&
		0
		\ar[near start]{rr}{}
		& & \Ext_A^i(\field, R)_1
		\ar{rr}
		\ar{dl}
		& & \Ext_A^i(\field, R^1)_1
		\ar{rr}
		\ar{dl}
		& & \Ext_A^{i+1}(\field, R)_{0}
		\ar{rr}
		\ar{dl}
		& & 0
		\\
		0
		\ar{rr}
		& & 0
		\ar{rr}
		\ar[crossing over, leftarrow, near start]{uu}{}
		& & H_\mideal^i(R^1)_1
		\ar{rr}
		\ar[crossing over, leftarrow, near start]{uu}{}
		& & H_\mideal^{i+1}(R)_0
		\ar{rr}
		\ar[crossing over, leftarrow, near start]{uu}{}
		& & 0.
	\end{tikzcd}
}\]
Now consider applying the snake lemma to both the front panel and the back panel. Note that the vertical maps on the back panel are all identically zero, since $\theta_2$ acts trivially on all of the modules there. Denote by $\tau$ the restriction of $\theta_2^{i+1, 0}$ to $\Ker\theta_1^{i+1, 0}$, appearing as the right vertical map in the front panel of the diagram. By the naturality of the sequence induced by the snake lemma, we obtain maps from the ``top" panel as follows:
\[
\begin{tikzcd}
0 \ar{r} & \Ext_A^i(\field, R)_0 \ar{r} \ar{d}& \Ext_A^i(\field, R^1)_0 \ar{r} \ar{d}& \Ext_A^{i+1}(\field, R)_{-1} \ar{r} \ar{d}& 0\\
0 \ar{r} & \Coker \theta_1^{i, 0} \ar{r} & \Ker\theta_2^{i, 0} \ar{r} & \Ker \tau \ar{r} & 0.
\end{tikzcd}
\]
Since the left vertical map is a surjection, we will be done if we can show that the right vertical map is a surjection. However, $\Ker\tau$ is simply
\[
(\Ker \theta_1^{i+1, 0})\cap (\Ker \theta_2^{i+1, 2}):= L^{i+1}.
\]
Note that the singularities of $\Delta$ are generically isolated if and only if $L^{i+1}=K^{i+1}$. On the other hand, $K^{i+1}$ is precisely the image of $\Ext_A^i(\field, R)_{-1}$ under $\varphi_R^i$, completing the proof.
\end{proof}

The intersection $L^{i+1}$ above is also central to the proof of Theorem \ref{annihilateThm}, which we now present as well.

\begin{proof}[Proof of Theorem \ref{annihilateThm}]
Once more, since $H_\mideal^i(R^2)$ may only have non-zero components in the graded degrees $0$, $1$, and $2$, we only need to check that $\mideal$ acts trivially on the degree $0$ and degree $1$ parts. We begin in degree $1$. Let $\alpha_v$, $\beta_v$, and $\gamma_v$ all denote the map induced by multiplication by $x_v$ in the diagram below.
\[
\begin{tikzcd}
0 \arrow{r} & \Coker\theta_2^{i, 1} \arrow{r} \arrow{d}{\alpha_v} & H_\mideal^i(R^2)_1 \arrow{r} \arrow{d}{\beta_v} & \Ker\theta_2^{i+1, 1} \arrow{r} \arrow{d}{\gamma_v}& 0 \\
0 \arrow{r} & 0 \arrow{r} & H_\mideal^i(R^2)_2 \arrow{r} & H_\mideal^{i+1}(R^1)_1 \arrow{r} & 0.
\end{tikzcd}
\]
The snake lemma provides the exact sequence
\[
0\to \Coker\theta_2^{i, 1} \to\Ker\beta_v\to\Ker\gamma_v\to0.
\]
Comparing this to the top row of the previous diagram, if we can show that $\Ker\gamma_v$ is all of $\Ker\theta_2^{i+1, 1}$, then we may conclude that $\Ker\beta_v$ is all of $H_\mideal^i(R^2)_1$. Note that $\Ker\theta_2^{i+1, 1}$ is a submodule of $H_\mideal^{i+1}(R^1)_0$; to study this submodule, consider the following diagram with exact rows.
\[
\begin{tikzcd}
0 \arrow{r} & \Coker\theta_1^{i+1, 0} \arrow{r} \arrow{d}{\theta_2^{i+1, 1}}& H_\mideal^{i+1}(R^1)_0 \arrow{r}\arrow{d}{\theta_2^{i+1, 1}} & \Ker\theta_1^{i+2, 0} \arrow{r} \arrow{d}{\tau} & 0\\
0 \arrow{r} & 0 \arrow{r} & H_\mideal^{i+1}(R^1)_1 \arrow{r} & H_\mideal^{i+2}(R)_0 \arrow{r} & 0.
\end{tikzcd}
\]
As in the previous proof, the rightmost vertical map $\tau$ is the restriction of $\theta_2^{i+2, 0}$ to $\Ker\theta_1^{i+2, 0}$. Once more, note that
\[
\Ker \tau = (\Ker \theta_1^{i+2, 0})\cap (\Ker \theta_2^{i+2, 2}):=L^{i+2}.
\]

Through another application of the snake lemma, we get a short exact sequence fitting into the top row of the following diagram
\[
\begin{tikzcd}
0\arrow{r} & \Coker\theta_1^{i+1, 0}\arrow{r}\arrow{d}{\cdot x_v}& \Ker\theta_2^{i+1, 1}\arrow{r}\arrow{d}{\cdot x_v}& L^{i+2} \arrow{r} \arrow{d}{\cdot x_v}&0\\
0 \arrow{r} & 0 \arrow{r} & H_\mideal^{i+1}(R^1)_1 \arrow{r} & H_\mideal^{i+2}(R)_0 \arrow{r} & 0.
\end{tikzcd}
\]
So, it now remains to show that the rightmost map is zero. But $x_v$ acts trivially on $L^{i+2}$ for all $i$ and for all $v$ if and only if
\[
L^{i+2} = K^{i+2},
\]
i.e., if and only if the singularities of $\Delta$ are generically isolated.

Now we show that $\mideal\cdot H_\mideal^i(R^2)_0=0$, independent of the type of isolation of the singularities of $\Delta$. Consider the diagram below:
\[
\begin{tikzcd}
	0\arrow{r} & H_\mideal^i(R^1)_0\arrow{r}\arrow{d}{\cdot x_v}& H_\mideal^i(R^2)_0\arrow{r}\arrow{d}{\cdot x_v}&0\\
	H_\mideal^i(R^1)_0\arrow{r}{\theta_2^{i, 1}} & H_\mideal^i(R^1)_1\arrow{r}& H_\mideal^i(R^2)_1\arrow{r}&H_\mideal^{i+1}(R^1)_0.
\end{tikzcd}
\]
From the exactness of the rows of this diagram, we can conclude that $x_v\cdot H_\mideal^i(R^2)_0=0$ if
\[
x_v\cdot H_\mideal^i(R^1)_0\subseteq \theta_2\cdot H_\mideal^i(R^1)_0.
\]
Generically, all coefficients of $\theta_2$ are non-zero. Combining this with the structure of $H_\mideal^i(R)$ outlined in Theorem \ref{Grabe}, it is immediate that
\[
x_v\cdot H_\mideal^i(R)_{-1}\subseteq \theta_2\cdot H_\mideal^i(R)_{-1}.
\]
The following diagram now completes the proof.
\[
\begin{tikzcd} 0 \arrow{r} & H_\mideal^i(R^1)_1 \arrow{r}{\delta} & H_\mideal^{i+1}(R)_0 \arrow{r} & 0\\
H_\mideal^i(R)_{0} \arrow{r} & H_\mideal^i(R^1)_0 \arrow{r}{\delta}  \arrow{u}{\cdot x_v}\arrow{d}{\cdot\theta_2}& H_\mideal^{i+1}(R)_{-1}\arrow{r}\arrow{u}{\cdot x_v} \arrow{d}{\cdot\theta_2}& H_\mideal^{i+1}(R)_0\\
0 \arrow{r} & H_\mideal^i(R^1)_1 \arrow{r}{\delta}  & H_\mideal^{i+1}(R)_0 \arrow{r} & 0
\end{tikzcd}
\]
\end{proof}

\begin{example}
Combining Theorem \ref{annihilateThm} with Corollary \ref{SuspensionsAlmostBuchs} provides an infinite family of interesting examples of rings with some prescribed properties that are quasi-Buchsbaum but not Buchsbaum. In particular, let $M$ be a $d$-dimensional manifold that is not a homology sphere and let $\Gamma$ be an arbitrary triangulation of $M$. Suppose further that
\[
\max\{i: \Gamma^{(i)}\text{ is Cohen-Macaulay}\}=r+1.
\]
Equivalently, the depth of $\field[\Gamma]$ is $r+1$. If we set $\Gamma'$ to be the join of $\Gamma$ with two points, then $\Gamma'$ is a triangulation of the suspension of $M$ and the depth of $\field[\Gamma']$ is $r+2$. Since the depth of the Stanley--Reisner ring is a topological invariant (\cite[Theorem 3.1]{Munkres}), if $\Delta$ is an arbitrary triangulation of the suspension of $M$ then $R^2$ has depth $r$. Since the singularities of $\Delta$ are generically isolated, $R^2$ is a quasi-Buchsbaum ring of Krull dimension $d$ and depth $r$ that is not Buchsbaum; furthermore, the canonical maps $\varphi_{R^2}^i:\Ext_A^i(\field, R^2)\to H_\mideal^i(R^2)$ are surjections in all degrees except $0$.
\end{example}

\subsection{Another surjection}

By \cite[Proposition I.3.4]{StVo}, the quasi-Buchsbaum property of some $R^2$ is equivalent to the fact that every homogeneous system of parameters of $R^2$ contained in $\mideal^2$ is a weakly regular sequence. In light of the typical definition of the Buchsbaum property in terms of l.s.o.p.'s being weakly regular sequences (see \cite{StVo})) along with the characterization appearing in Theorem \ref{surjective} by surjectivity of the maps $\varphi_M^i:\Ext_A^i(\field, M)\to H_\mideal^i(M)$, what happens when we consider the natural maps $\Ext_A^i(A/\mideal^2, R^2)$ instead? Our next result establishes another measure of the gap between the structure of $R^2$ and the Buchsbaum property:

\begin{proposition}Suppose $\Delta$ has isolated singularities. Then the canonical maps $\psi_{R^2}^i:\Ext_A^i(A/\mideal^2, R^2)\to H_\mideal^i(R^2)$ are surjective.
\end{proposition}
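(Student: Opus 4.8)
The plan is to reduce the assertion to a single‑degree statement by a degree‑counting argument, and then to settle that case by comparison with $R^1$.

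Write $\psi_{M,l}^i\colon\Ext_A^i(A/\mideal^l,M)\to H_\mideal^i(M)$ for the canonical map into the direct limit $H_\mideal^i(M)=\varinjlim_l\Ext_A^i(A/\mideal^l,M)$, so that $\psi_{M,1}^i=\varphi_M^i$ and $\psi_{M,2}^i=\psi_M^i$. I would first record the following general observation. Applying $\Hom_A(-,M)$ to $0\to\mideal^l/\mideal^{l+1}\to A/\mideal^{l+1}\to A/\mideal^l\to0$ and using that $\mideal^l/\mideal^{l+1}$ is a direct sum of copies of $\field(-l)$ shows that the cokernel of the transition map $\Ext_A^i(A/\mideal^l,M)\to\Ext_A^i(A/\mideal^{l+1},M)$ embeds into a direct sum of copies of $\Ext_A^i(\field,M)(l)$; hence $\operatorname{Im}\psi_{M,l+1}^i/\operatorname{Im}\psi_{M,l}^i$ is a subquotient of a direct sum of copies of $\Ext_A^i(\field,M)(l)$. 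In particular, if $\Ext_A^i(\field,M)$ is concentrated in degrees $\le c$ while $H_\mideal^i(M)$ is concentrated in degrees $\ge0$, then $\operatorname{Im}\psi_{M,l+1}^i=\operatorname{Im}\psi_{M,l}^i$ whenever $l>c$, and so $\psi_{M,c+1}^i$ is already surjective.

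Now I would apply this twice. Theorem \ref{ExtStructure} together with the exact sequences $0\to R\xrightarrow{\cdot\theta_1}R\to R^1\to0$ and $0\to R^1\xrightarrow{\cdot\theta_2}R^1\to R^2\to0$ (and the vanishing of multiplication by $\theta_1$, $\theta_2$ on $\Ext_A^i(\field,-)$) shows that $\Ext_A^i(\field,R^1)$ is concentrated in degrees $-1,0,1$ and $\Ext_A^i(\field,R^2)$ in degrees $-1,0,1,2$; and by the calculations of Section \ref{calculations}, $H_\mideal^i(R^1)$ lives in degrees $0,1$ and $H_\mideal^i(R^2)$ in degrees $0,1,2$. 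Taking $c=1$ for $M=R^1$ yields that $\psi_{R^1}^i=\psi_{R^1,2}^i$ is surjective in every degree. Taking $c=2$ for $M=R^2$ yields that $\psi_{R^2,3}^i$ is surjective and that $\operatorname{Im}\psi_{R^2,3}^i/\operatorname{Im}\psi_{R^2}^i$ is concentrated in degree $0$; consequently $\psi_{R^2}^i$ is automatically surjective in degrees $1$ and $2$, and it remains only to prove that it is surjective in degree $0$. For the degree‑$0$ case I would invoke the naturality of $\psi^i$ in the module variable, applied to the surjection $R^1\twoheadrightarrow R^1/\theta_2R^1=R^2$: this gives a commutative square whose vertical maps are $\psi_{R^1,2}^i$ and $\psi_{R^2,2}^i$ and whose horizontal maps are induced by $R^1\to R^2$. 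In degree $0$ the lower horizontal map $H_\mideal^i(R^1)_0\to H_\mideal^i(R^2)_0$ is an isomorphism, since the long exact sequence of $0\to R^1\xrightarrow{\cdot\theta_2}R^1\to R^2\to0$ has $H_\mideal^i(R^1)_{-1}=H_\mideal^{i+1}(R^1)_{-1}=0$, so that $\Coker\theta_2^{i,0}=H_\mideal^i(R^1)_0$ and $\Ker\theta_2^{i+1,0}=0$; and $\psi_{R^1,2}^i$ is surjective in degree $0$ by the previous paragraph. Hence the composite down‑then‑across is surjective onto $H_\mideal^i(R^2)_0$, which forces $\psi_{R^2}^i=\psi_{R^2,2}^i$ to be surjective in degree $0$ as well, completing the proof.

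The crux — and the only genuinely new point beyond the bookkeeping already used for the main theorems — is the degree‑counting reduction: even though $\varphi_{R^2}^i=\psi_{R^2,1}^i$ can genuinely fail to be surjective (Proposition \ref{AlmostBuchsExamples}), the difference between successive levels $\Ext_A^i(A/\mideal^l,R^2)$ is pushed into strictly negative degrees once $l\ge3$ by the bounded cohomological range of $\Ext_A^i(\field,R^2)$ together with the degree shift in $\mideal^l/\mideal^{l+1}$. Thus only degree $0$ can obstruct surjectivity of $\psi_{R^2}^i$, and there $R^2$ borrows surjectivity from the tamer ring $R^1$, for which even the level‑$2$ map $\psi_{R^1,2}^i$ is automatically onto. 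The place where care is needed is making sure all the degree ranges in the second paragraph are correct and that the naturality square is the one obtained from $R^1\to R^2$ (not from a self‑map of $R^1$ by $\theta_2$), since $\theta_2$ no longer acts trivially on $\Ext_A^i(A/\mideal^2,R^1)$.
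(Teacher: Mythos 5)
Your proof is correct, and it takes a genuinely different route from the paper's. The paper works one degree at a time: it invokes the explicit description of the canonical maps from \cite[Corollary 4.5]{canonicalMap} and \cite[Corollary 2]{characterizations} to see that $\psi_R^i$ is an isomorphism in degrees $0$ and $-1$, and then chases a sequence of commutative diagrams built from $(\ref{S1})$ and $(\ref{S2})$ to propagate surjectivity up to $R^1$ and then $R^2$ in degrees $0$, $1$, and $2$. You instead make a general degree\mbox{-}counting observation about the direct system $\Ext_A^i(A/\mideal^l, M)$: since the cokernel of each transition map is a subquotient of copies of $\Ext_A^i(\field,M)(l)$ and hence drifts into negative degrees as $l$ grows, $\psi_{M,c+1}^i$ is already surjective whenever $\Ext_A^i(\field,M)$ is bounded above in degree by $c$ and $H_\mideal^i(M)$ is bounded below by $0$. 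This replaces the paper's use of \cite[Corollary 2]{characterizations} entirely, and dispatches degrees $1$ and $2$ for $R^2$ in one stroke rather than two separate diagram chases; the final naturality square for degree $0$ is the same in both arguments. Your approach is slightly more abstract and arguably more robust (it would apply verbatim to $R^m$ for $\Delta$ of higher singularity dimension with depth at least $m$, once the requisite finiteness of local cohomology is known), whereas the paper's approach stays closer to the topology of $\Delta$ and keeps the identifications with simplicial cohomology explicit, which is in keeping with the rest of that section. One point worth making explicit in a final write\mbox{-}up: the observation uses not only that the subquotient is concentrated in degrees $\le c-l$, but also that it is simultaneously a subquotient of $H_\mideal^i(M)$ (concentrated in degrees $\ge 0$), hence vanishes; this is implicit in your phrasing but deserves a sentence.
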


\begin{proof}
Once more, surjectivity needs only to be demonstrated in degrees $0$, $1$, and $2$. We will begin with the degree $2$ piece. The exact sequence (\ref{S1}) with $j=1$ and $l=2$ gives rise to the commutative diagram below, where the horizontal maps are isomorphisms.
\[
\begin{tikzcd}
\Ext^i_A(A/\mideal^2,R^1)_1\arrow{r}{\delta}\arrow{d}{\psi_{R^1}^i} & \Ext^{i+1}_A(A/\mideal^2,R)_0 \arrow{d}{\psi_{R}^{i+1}}\\
H^i_\mideal(R^1)_1 \arrow{r}{\delta} & H^{i+1}_\mideal(R)_0
\end{tikzcd}
\]
By \cite[Corollary 4.5]{canonicalMap}, the map $\psi_R^{i+1}:\Ext_A^{i+1}(A/\mideal^2, R)_0\to H^{i+1}_\mideal(R)_0$ is equivalent to the identity map on $H_\emptyset^{i}(\Delta)$. Hence, $\psi_{R^1}^{i+1}$ is an isomorphism in degree $1$. By the same argument, the exact sequence (\ref{S2}) and the corresponding commutative diagram show that $\psi_{R^2}^i:\Ext^i_A(A/\mideal^2, R^2)_2\to H^i_\mideal(R^2)_2$ is an isomorphism.

The arguments for further graded pieces are similar. First consider the following diagram with exact rows again induced by (\ref{S1}).
\[
\begin{tikzcd} 0 \arrow{r} & \Ext_A^i(A/\mideal^2, R)_0 \arrow{d}[swap]{\psi_R^i} \arrow{r} & \Ext_A^i(A/\mideal^2, R^1)_0 \arrow{d}{\psi_{R^1}^i} \arrow{r} & \Ext_A^{i+1}(A/\mideal^2, R)_{-1} \arrow{d}{\psi_R^{i+1}} \arrow{r} & 0 \\
\, & H_\mideal^i(R)_0 \arrow{r} & H_\mideal^i(R^1)_0 \arrow{r} & H_\mideal^{i+1}(R)_{-1}
\end{tikzcd}
\]
By \cite[Corollary 2]{characterizations}, the left and right vertical maps are isomorphisms. Exactness then implies that the middle vertical map is surjective. So, $\psi_{R^1}^i$ is an isomorphism in degree $1$ and a surjection in degree $0$. For the surjectivity of $\psi_{R^2}^i$ in degree $1$, consider the following commutative diagram induced by (\ref{S2}) with exact rows.
\[
\begin{tikzcd} 0 \arrow{r} & \Ext_A^i(A/\mideal^2, R^1)_1 \arrow{d}[swap]{\psi_{R^1}^i} \arrow{r} & \Ext_A^i(A/\mideal^2, R^2)_1 \arrow{d}{\psi_{R^2}^i} \arrow{r} & \Ext_A^{i+1}(A/\mideal^2, R^1)_{0} \arrow{d}{\psi_{R^1}^{i+1}} \arrow{r} & 0 \\
\, & H_\mideal^i(R^1)_1 \arrow{r} & H_\mideal^i(R^2)_1 \arrow{r} & H_\mideal^{i+1}(R^1)_{0}
\end{tikzcd}
\]
We have just demonstrated that the left and right vertical maps are at least surjections. Again by exactness, we now have that $\psi_{R^2}^i$ is a surjection in degree $1$. Now consider one final commutative diagram.
\[
\begin{tikzcd} \Ext_A^i(A/\mideal^2, R^1)_0 \arrow{r} \arrow{d}{\psi_{R^1}^i} &  \Ext_A^i(A/\mideal^2, R^2)_0 \arrow{d}{\psi_{R^2}^i}\\
H_\mideal^i(R^1)_0 \arrow{r} & H_\mideal^i(R^2)_0
\end{tikzcd}
\]
From Section \ref{calculations}, we know that the bottom map is an isomorphism. Since $\psi_{R^1}^i$ is a surjection in degree zero, $\psi_{R^2}^i$ must be as well.
\end{proof}

\section{An enumerative theorem}\label{enumerative}
Although $R^2$ is not Buchsbaum, the quasi-Buchsbaum property does allow for a computation of the Hilbert series of the generic Artinian reduction of $R$ by a h.s.o.p. of a particular type. Let $\Delta$ have generically isolated singularities and say $\Theta=\theta_1, \ldots, \theta_d$ is a homogeneous system of parameters for $\Delta$ such that $\theta_1, \theta_2$ is a linear regular sequence, while $\theta_3, \ldots, \theta_d$ are quadratic forms.
For $2\le i\le d-1$, there are exact sequences
\[
0\to (0:_{R^i}\theta_{i+1})_{j-2} \to R^i_{j-2}\xrightarrow{\cdot\theta_{i+1}} R^i_j\to R^{i+1}\to 0.
\]
Since $R^2$ is quasi-Buchsbaum, the sequence $\theta_3, \ldots, \theta_d$ is a weakly regular sequence by \cite[Proposition I.2.1(ii)]{StVo}. Furthermore, the proof of the proposition shows that $(0:_{R^2}\theta_3)=H_\mideal^0(R^2)$. On the other hand, \cite[Theorem 3.6]{S-Quasi} states that $R^i$ is quasi-Buchsbaum for $2\le i\le d-1$. Hence, 
the sequence above can be re-written as 
\[
0\to H_\mideal^0(R^i)_{j-2}\to  R^i_{j-2}\to R^i_j\to R^{i+1}\to 0.
\]
for $2\le i\le d-1$. If $\Hilb(M; t)$ denotes the Hilbert series of a $\ZZ$-graded $A$-module $M$, then these exact sequences imply
\[
\Hilb(R^{i+1}; t)=(1-t^2)\Hilb(R^i; t)+t^2\Hilb(H_\mideal^0(R^i); t).
\]
A standard calculation then shows
\begin{equation}\label{HilbertEquality}
\Hilb(R^d; t)=(1+t)^{d-2}(1-t)^d\Hilb(R; t)+\sum_{i=2}^{d-1}\left[t^2(1-t^2)^{d-1-i}\Hilb(H_\mideal^0(R^i); t)\right].
\end{equation}
The first term reduces to $(1+t)^{d-2}\sum_{i=0}^d h_i(\Delta)t^i$, following \cite{St-UBC}. To analyze the sum, \cite[Lemma 3.5]{S-Quasi} provides the exact sequence
\[
0\to H_\mideal^j(R^i)_k\to H_\mideal^j(R^{i+1})_k\to H_\mideal^{j+1}(R^i)_{k-2}\to 0
\]
for $2\le i\le d-2$ and $0\le j\le d-i-2$. So, as vectors spaces over $\field$, for $2\le i\le d-1$ there are isomorphisms
\[
H_\mideal^0(R^i)_j\cong\bigoplus_{j=0}^{i-2}\left(\bigoplus_{i-2\choose j} H_\mideal^j(R^2)_{-2j}\right).
\]
That is,
\begin{equation}\label{hilbEq1}
\Hilb(H_\mideal^0(R^i); t)=\sum_{j=0}^{i-2}{i-2\choose j}t^{2j}\Hilb(H_\mideal^j(R^2); t).
\end{equation}
Now define
\[
\mu^i= \dim_\field H_\mideal^i(R^2)_0=\dim_\field \left(\Coker\theta_1^{i, 0}\oplus \Ker\theta_1^{i+1, 0}\right),
\]
\[
\nu^i= \dim_\field H_\mideal^i(R^2)_1=\dim_\field \left(\Coker\theta_2^{i, 1}\oplus \Ker\theta_2^{i+1, 1}\right),
\]
and
\[
\beta_\emptyset^{i}(\Delta)= \dim_\field H_\emptyset^i(\Delta),
\]
so
\[
\Hilb(H_\mideal^i(R^2); t)=\mu^i+\nu^it+\beta_\emptyset^{i+1}(\Delta)t^2.
\]
Combining this equality with equations (\ref{HilbertEquality}) and (\ref{hilbEq1}) implies the following theorem describing $\Hilb(R^d; t)$.
\begin{theorem}\label{enumerativeThm}
If $q=2p$ is even, then 
\[
\dim_\field(R^d_q)=\sum_{i=0}^q{d-2\choose q-i}h_i(\Delta)+(-1)^{p-1}{d-2\choose p}\sum_{k=0}^{p-1}\left[(-1)^k\left(\mu^k+\frac{p\beta_\emptyset^k(\Delta)}{d-1-p}\right)\right].
\]
If $q=2p+1$ is odd, then
\[
\dim_\field(R^d_q)=\sum_{i=0}^q{d-2\choose q-i}h_i(\Delta)+(-1)^{p-1}{d-2\choose p}\sum_{k=0}^{p-1}(-1)^k\nu^k.
\]

\end{theorem}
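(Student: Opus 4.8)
The plan is purely computational: substitute the three facts assembled immediately before the statement into one another and then extract the coefficient of $t^q$. Plugging the degree‑by‑degree formula $\Hilb(H_\mideal^j(R^2);t)=\mu^j+\nu^jt+\beta_\emptyset^{j+1}(\Delta)t^2$ into~(\ref{hilbEq1}) and the result into~(\ref{HilbertEquality}), and using that the first summand of~(\ref{HilbertEquality}) equals $(1+t)^{d-2}\sum_{i=0}^d h_i(\Delta)t^i$, we obtain
\[
\Hilb(R^d;t)=(1+t)^{d-2}\sum_{i=0}^d h_i(\Delta)t^i+\sum_{i=2}^{d-1}\sum_{j=0}^{i-2}\binom{i-2}{j}t^{2+2j}(1-t^2)^{d-1-i}\bigl(\mu^j+\nu^jt+\beta_\emptyset^{j+1}(\Delta)t^2\bigr).
\]
The coefficient of $t^q$ in the first summand is $\sum_{i=0}^q\binom{d-2}{q-i}h_i(\Delta)$, which is the leading term in both cases of the theorem, so everything reduces to understanding the double sum.

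Next I would expand $(1-t^2)^{d-1-i}=\sum_{\ell\ge0}(-1)^\ell\binom{d-1-i}{\ell}t^{2\ell}$ and note that the $\mu^j$ and $\beta_\emptyset^{j+1}(\Delta)$ pieces contribute only to even powers of $t$ while the $\nu^j$ piece contributes only to odd powers; this is exactly the origin of the parity dichotomy in the statement. Fixing the target degree $q$ and solving for $\ell$, each of the three pieces leaves, for fixed $j$ (or $k:=j+1$ in the $\beta$-piece), an inner sum over $i$ of the form $\sum_{i=2}^{d-1}\binom{i-2}{a}\binom{d-1-i}{b}$; substituting $m=i-2$ turns this into $\sum_{m=0}^{d-3}\binom{m}{a}\binom{d-3-m}{b}$, which by the Vandermonde-type convolution identity $\sum_{m}\binom{m}{a}\binom{n-m}{b}=\binom{n+1}{a+b+1}$ collapses to a single binomial coefficient. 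For the $\mu$ and $\nu$ pieces one checks $a+b+1=p$, giving the common prefactor $(-1)^{p-1}\binom{d-2}{p}$ (together with signs $(-1)^k$ on $\mu^k$, $\nu^k$); for the $\beta$ piece one gets $a+b+1=p-1$, giving $(-1)^{p-1}\binom{d-2}{p-1}$.

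The final cosmetic step uses $\binom{d-2}{p-1}=\frac{p}{d-1-p}\binom{d-2}{p}$ to rewrite the $\beta$-prefactor in the form printed in the theorem, after which the even‑$q$ and odd‑$q$ formulas can be read off directly. I expect no genuine obstacle — all the homological input is already in place — so the real content is bookkeeping: tracking the summation ranges through the reindexing, confirming the parity split, and checking that the $i$‑range $2\le i\le d-1$ together with the inner constraints $\ell\ge0$ and $j\le i-2$ produces exactly the unrestricted convolution sum the identity requires (terms outside the range contribute zero because one binomial coefficient vanishes, so no boundary corrections appear). One minor point worth flagging is that the $k=0$ summand in the $\beta$‑part of the even‑$q$ formula is vacuous, since $\beta_\emptyset^0(\Delta)=0$ by connectedness of $\Delta$; this is why the sums can be written uniformly as $\sum_{k=0}^{p-1}$.
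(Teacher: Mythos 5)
Your proposal is correct and follows exactly the route the paper indicates (which is just a terse ``combine these three equalities''): you substitute the degree-wise formula for $\Hilb(H_\mideal^j(R^2);t)$ into~(\ref{hilbEq1}), plug into~(\ref{HilbertEquality}), expand $(1-t^2)^{d-1-i}$, and collapse the inner sum over $i$ with the convolution identity $\sum_{m}\binom{m}{a}\binom{n-m}{b}=\binom{n+1}{a+b+1}$, finishing with $\binom{d-2}{p-1}=\tfrac{p}{d-1-p}\binom{d-2}{p}$ and the observation $\beta_\emptyset^0(\Delta)=0$. The only content beyond what the paper writes is this bookkeeping, which you carry out correctly, so this is the intended argument made explicit rather than a different one.
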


\begin{remark}
	Note that $\mu^i$ is actually a topological invariant of $\Delta$. If $\|\Delta\|$ is the geometric realization of $\Delta$ and $\Sigma$ is the set of isolated singularities of $\Delta$, then $\mu^i = \dim_\field H_\emptyset^{i-1}(\|\Delta\|\smallsetminus \Sigma)$ (see \cite[Theorem 4.7]{NS-sing}). At present there is no similar description for $\nu^i$, as it is not clear how to trace the geometry of $\Delta$ all the way through to $H_\mideal^i(R^2)_1$ in such a precise manner.
\end{remark}

\section{Comments}\label{comments}
There are many possible abstractions of the results that have been presented. Perhaps the most immediate consideration is in introducing singularities of dimension greater than $0$. In this case, the structure of $H_\mideal^i(\field[\Delta])$ outlined in Theorem \ref{Grabe} becomes more involved and hinders the calculations of Section \ref{calculations}. For instance, if $\Delta$ contains singular faces even of dimension $1$, then $\theta_1^{i, j}$ will not, in general, be an isomorphism in degrees $j<0$. This implies that there is some $i$ such that $H_\mideal^i(\field[\Delta]/\theta_1\field[\Delta])_j$ is non-zero for infinitely many values of $j$, i.e., $\field[\Delta]/\theta_1\field[\Delta]$ does not have finite local cohomology. In fact, Miller, Novik, and Swartz classified when this is the case for quotients of $\field[\Delta]$ by arbitrarily many generic linear forms:

\begin{theorem}
	(\cite[Theorem 2.4]{MNS-sing}) A simplicial complex $\Delta$ is of singularity dimension at most $m-1$ if and only if $\field[\Delta]/(\theta_1, \ldots, \theta_m)\field[\Delta]$ has finite local cohomology.
\end{theorem}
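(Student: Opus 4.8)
The statement is about $R^m := \field[\Delta]/(\theta_1,\dots,\theta_m)\field[\Delta]$ for generic linear forms, where having finite local cohomology means that $H_\mideal^i(R^m)$ is a finite-dimensional $\field$-vector space for every $i<\dim R^m$ (and for $m\le d$ generically $\dim R^m=d-m$; for $m>d$ the statement is trivial). The plan is to carry the method of Section \ref{calculations} — which settles $m=1,2$ in the isolated case — through to arbitrary $m$; the extra ingredient required is Gr\"abe's structure theorem in its general form (the $\Z^{|V|}$-graded statement underlying Theorem \ref{Grabe}), which I would repackage as a filtration. Namely, $H_\mideal^i(\field[\Delta])$ carries a finite filtration $0=W_{-1}\subseteq W_0\subseteq W_1\subseteq\cdots$ by $A$-submodules indexed by face dimension, with
\[
W_w/W_{w-1}\;\cong\;\bigoplus_{F\in\Delta,\ |F|=w} H_F^{i-1}(\Delta)\otimes_\field H_\mideal^{w}\bigl(\field[x_v:v\in F]\bigr),
\]
where a variable $x_u$ with $u\notin F$ kills the $F$-summand and the variables $x_v$, $v\in F$, act as usual on the top local cohomology of $\field[x_v:v\in F]$; this comes from coarsening Hochster's and Gr\"abe's $\Z^{|V|}$-graded formula by grouping multidegrees according to their support. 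The point is that the $F$-summand is finite-dimensional over $\field$ exactly when $|F|=0$ or $H_F^{i-1}(\Delta)=0$, so the infinite-dimensional part of $H_\mideal^i(\field[\Delta])$ is assembled entirely from positive-dimensional faces carrying nonzero $H_F^{i-1}(\Delta)$.

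For the forward direction I would induct on $m$ using the four-term sequence $0\to(0:_{R^{j-1}}\theta_j)\to R^{j-1}(-1)\xrightarrow{\theta_j}R^{j-1}\to R^j\to 0$. Since $\theta_j$ is generic it avoids all non-maximal associated primes of $R^{j-1}$, so $(0:_{R^{j-1}}\theta_j)$ has finite length; splitting the sequence and chasing local cohomology shows that, modulo finite-length modules, $H_\mideal^i(R^j)$ is an extension of $\Ker\bigl(\theta_j:H_\mideal^{i+1}(R^{j-1})\to H_\mideal^{i+1}(R^{j-1})\bigr)$ by $\Coker\bigl(\theta_j:H_\mideal^{i}(R^{j-1})\to H_\mideal^{i}(R^{j-1})\bigr)$. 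The key local computation is: for $P=\field[y_1,\dots,y_w]$ with $w\ge1$ and \emph{any} nonzero linear form $\ell$, the sequence $0\to P(-1)\xrightarrow{\ell}P\to\field[y_1,\dots,y_{w-1}]\to0$ shows that multiplication by $\ell$ on $H_\mideal^{w}(P)$ is surjective with kernel $\cong H_\mideal^{w-1}(\field[y_1,\dots,y_{w-1}])$ up to a degree shift. Applied blockwise (a generic $\theta_j$ restricts to a nonzero linear form on every $\field[x_v:v\in F]$ that occurs), a single generic linear quotient sends a face-block of cohomological degree $\delta$ and dimension $|F|$ to one of cohomological degree $\delta-1$ and dimension $|F|-1$, with vanishing cokernel, while the unique finite block stays finite. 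Iterating $m$ times, the only possible infinite-dimensional contributions to $H_\mideal^i(R^m)$ come from faces $F$ with $|F|-m\ge1$ and $H_F^{(i+m)-1}(\Delta)\neq0$; but $i<\dim R^m=d-m$ forces $(i+m)-1<d-1$, so such an $F$ is singular and $|F|\ge m+1$ forces $\dim F\ge m$, contradicting the singularity dimension being at most $m-1$. The converse is the same bookkeeping run backwards: a singular face $F$ with $\dim F\ge m$ and some $i_0\le d-1$ with $H_F^{i_0-1}(\Delta)\neq0$ gives a block of dimension $|F|\ge m+1$ that survives all $m$ quotients and shows up infinite-dimensionally in $H_\mideal^{i_0-m}(R^m)$ with $i_0-m<d-m$.

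The technical heart, and the main obstacle, is making ``applied blockwise'' rigorous: a priori $\Ker(\theta_j)$ and $\Coker(\theta_j)$ on the filtered module $H_\mideal^\bullet(R^{j-1})$ are sub- and quotient modules that need not split as direct sums of face-blocks, so one must show the infinite-dimensional content survives in the clean form at every step. I see two honest routes. One: order the filtration so the unique finite block ($|F|=0$, the only one with possibly nonzero cokernel) sits at the bottom, and propagate finiteness up the filtration through the snake lemma, using that the $H_\mideal^{w}(\field[x_v:v\in F])$ are top local cohomologies of polynomial rings, on which every nonzero linear form is regular, so no genericity is lost at the block level. Two: following the Remark after Theorem \ref{Grabe}, run the whole argument inside the $\Z^{|V|}$-graded Koszul complex, where ``generic'' is the non-vanishing of finitely many explicit polynomials in the coefficients of the $\theta_i$ and every map is visible linear algebra over $\field$. (A third, structurally cleaner but comparably technical route avoids the $A$-module structure altogether, using that finite local cohomology is equivalent to $(R^m)_{\mathfrak{p}}$ being Cohen--Macaulay of the expected dimension at all non-maximal homogeneous primes, the fact that the non-Cohen--Macaulay locus of $\field[\Delta]$ is the union of the coordinate subspaces cut out by $(x_v:v\notin F)$ over faces $F$ contained in a singular face — of top dimension one more than the singularity dimension — and a transversality statement for generic linear sections.)
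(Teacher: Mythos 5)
This statement is cited from \cite[Theorem 2.4]{MNS-sing} and is not proved in the present paper, so there is no in-paper argument to compare against; what follows is an assessment of your proposal on its own terms.

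Your opening move is sound: coarsening Hochster's/Gr\"abe's $\Z^{|V|}$-graded description by support size does give a genuine filtration of $H_\mideal^i(\field[\Delta])$ by $A$-submodules (multiplication by any $x_u$ can only shrink the support), and your formula for the associated graded pieces is correct, as is the local computation that a nonzero linear form acts surjectively on $H_\mideal^w(\field[y_1,\dots,y_w])$ with kernel of one lower cohomological degree. The dimension counting that would follow from a clean ``blockwise'' picture also matches the statement.

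The genuine gap is the one you flag yourself, and it is deeper than a bookkeeping nuisance. After the first quotient, $H_\mideal^i(R^1)$ is (up to finite length) an extension of $\Coker(\theta_1)$ on $H_\mideal^i(\field[\Delta])$ by $\Ker(\theta_1)$ on $H_\mideal^{i+1}(\field[\Delta])$, and while $\theta_1$ respects the support filtration, the induced filtrations on $\Ker$ and $\Coker$ have graded pieces that are only sub- and quotient spaces of the block-level kernels and cokernels (the snake lemma interposes a connecting map). You then need $\theta_2$ to act compatibly on those sub/quotient spaces, and \emph{a priori} they are not $\theta_2$-invariant in the näive way; your Route One (propagating finiteness up the filtration by the snake lemma, exploiting that the only nonzero cokernels occur in the $|F|=0$ block) is the right idea but needs an honest inductive statement about what replaces the filtration on $H_\mideal^\bullet(R^j)$ for $j\ge 1$, which you do not supply. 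Route Two (working entirely in the $\Z^{|V|}$-graded Koszul complex) faces the same difficulty, just phrased in coordinates. Your ``third route'' --- finite local cohomology is equivalent to the module being Cohen--Macaulay at all non-maximal homogeneous primes, the non-CM locus of $\field[\Delta]$ is the union of coordinate subspaces $V(\mathfrak p_F)$ over faces $F$ with $\lk_\Delta F$ not Cohen--Macaulay (equivalently, faces contained in a singular face), and $m$ generic hyperplane sections collapse that locus to the irrelevant point precisely when its dimension is at most $m$ --- is the structurally cleanest path and avoids the filtration difficulties entirely; if you want to make the argument complete, that is the route I would develop, with the transversality statement spelled out.
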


In the case that $m=1$, we know that $\field[\Delta]/\theta_1\field[\Delta]$ not only has finite local cohomology, but it is also Buchsbaum if and only if the singularities of $\Delta$ are homologically isolated. So, one may pose the following question.

\begin{question}If $\Delta$ is of singularity dimension $m-1$, is there an analog of the homological isolation property for singularities of arbitrary dimension classifying when $\field[\Delta]/(\theta_1, \ldots, \theta_m)\field[\Delta]$ is Buchsbaum?
\end{question}

A possible property could be that all pairs of images of maps of the form $H^i(\Delta, \cost_\Delta (F\cup\{u\}))\to H^i(\Delta, \cost_\Delta F)$ and $H^i(\Delta, \cost_\Delta (F\cup\{v\}))\to H^i(\Delta, \cost_\Delta F)$ occupy linearly independent subspaces of $H^i(\Delta, \cost_\Delta F)$ for all faces $F$ and all vertices $u$ and $v$ in the appropriate dimensions.

On the other hand, when $m=1$ we know that $\field[\Delta]/\theta_1\field[\Delta]$ has finite local cohomology and that $\field[\Delta]/(\theta_1, \theta_2)\field[\Delta]$ is quasi-Buchsbaum if and only if the singularities of $\Delta$ are generically isolated. The leads to our next question.

\begin{question}If $\Delta$ is a simplicial complex of singularity dimension $m-2$ and $\field[\Delta]$ is of depth at least $m$ with $\theta_1, \ldots, \theta_{m}$ a regular sequence on $\field[\Delta]$, is there an analog of the generic isolation property classifying when $\field[\Delta]/(\theta_1, \ldots, \theta_{m})\field[\Delta]$ is quasi-Buchsbaum?
\end{question}

Again, one candidate property would be that given $m+1$ generic linear forms, the pairwise intersections
\[
\Ker \theta_i^{l, 0}\cap\Ker\theta_j^{l, 0}
\]
are all trivially equal to $K^l$.

Lastly, we have been able to provide many examples of complexes $\Delta$ with isolated singularities in which $\field[\Delta]/(\theta_1, \theta_2)\field[\Delta]$ is quasi-Buchsbaum but not Buchsbaum. In light of these examples, we present the following conjecture.

\begin{conjecture}\label{neverBuchsbaum}
	In the setting of Theorem \ref{annihilateThm}, if the singularities of $\Delta$ are not homologically isolated then $\field[\Delta]/(\theta_1, \theta_2)\field[\Delta]$ is never Buchsbaum.
\end{conjecture}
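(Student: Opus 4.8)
The plan is to run everything through the surjectivity criterion of Theorem~\ref{surjective}: it suffices to produce a cohomological degree $i<\dim R^2=\dim\field[\Delta]-2$ at which $\varphi_{R^2}^i$ fails to be surjective. First I would dispose of the case in which the singularities are not even generically isolated: by Theorem~\ref{surjectivityThm} some $\varphi_{R^2}^i$ then fails to be surjective in internal degree $1$, and the calculations of Section~\ref{calculations} together with the proof of Theorem~\ref{surjectivityThm} (the sequence (\ref{kercokersequence2}) and the intersection $L^{i+1}$) locate such a failure at a cohomological degree below $\dim R^2$. So one may assume the singularities are generically isolated but not homologically isolated; by Theorem~\ref{surjectivityThm} the maps $\varphi_{R^2}^i$ are then surjective in every internal degree except possibly $0$, and the whole question collapses to internal degree $0$.

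Next I would compute $\Coker(\varphi_{R^2}^i)$ in internal degree $0$. Since $H_\mideal^{\bullet}(R^1)$ is concentrated in internal degrees $0$ and $1$, the map $H_\mideal^i(R^1)_0\to H_\mideal^i(R^2)_0$ induced by (\ref{S2}) is an isomorphism; and since $\theta_2$ is regular on $R^1$ it acts trivially on $\Ext_A^{\bullet}(\field,R^1)$, so (\ref{S2}) yields a short exact sequence $0\to\Ext_A^i(\field,R^1)_0\to\Ext_A^i(\field,R^2)_0\to\Ext_A^{i+1}(\field,R^1)_{-1}\to 0$. Feeding these into the snake lemma, together with the diagram in the proof of Proposition~\ref{homIsoIsBuchs} that identifies $\Coker(\varphi_{R^1}^i)_0$ with $\Ker\theta_1^{i+1,0}/K^{i+1}$, one obtains $\Coker(\varphi_{R^2}^i)_0\cong\Coker\!\big(\delta_i\colon\Ext_A^{i+1}(\field,R^1)_{-1}\to\Ker\theta_1^{i+1,0}/K^{i+1}\big)$ for a connecting map $\delta_i$.

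The key step is to show $\delta_i=0$. By naturality of the canonical maps with respect to (\ref{S1}) and (\ref{S2}), the source $\Ext_A^{i+1}(\field,R^1)_{-1}$ is identified with $\Ext_A^{i+1}(\field,R)_{-1}$, and a diagram chase (the same style of three-dimensional diagram used in the proof of Theorem~\ref{surjectivityThm}, now comparing the $R\to R^1$ and $R^1\to R^2$ sequences) forces the image of $\delta_i$ into the image of $\varphi_R^{i+1}$ on $\Ext_A^{i+1}(\field,R)_{-1}$; by Proposition~\ref{canonicalMapImage} that image is exactly $K^{i+1}$, which is $0$ in $\Ker\theta_1^{i+1,0}/K^{i+1}$. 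Granting this, $\Coker(\varphi_{R^2}^i)_0\cong\Ker\theta_1^{i+1,0}/K^{i+1}$ is precisely the level-$i$ obstruction to homological isolation, hence nonzero for some $i$, and $R^2$ is not Buchsbaum by Theorem~\ref{surjective}.

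The main obstacle is pinning down that the offending $i$ lies strictly below $\dim R^2$. Since each singular vertex $v$ has $\lk_\Delta v$ Buchsbaum but not Cohen--Macaulay, one only knows a priori that $\Ker\theta_1^{i+1,0}/K^{i+1}\neq 0$ for some $i\le\dim\field[\Delta]-2=\dim R^2$, whereas Theorem~\ref{surjective} only detects failures of surjectivity in degrees strictly below $\dim R^2$. The argument above therefore completes cleanly whenever the failure of homological (or generic) isolation occurs at some level $i\le\dim\field[\Delta]-3$; the remaining possibility, that it is confined to the top level $i=\dim\field[\Delta]-2$, seems to require genuinely new input — for instance combinatorial restrictions forbidding such configurations, a hands-on analysis of the internal-degree $1$ and $2$ parts of $H_\mideal^{\dim R^2}(R^2)$, or playing the Hilbert-function constraints that Buchsbaumness imposes (via Schenzel's theorem) against the enumerative formula of Section~\ref{enumerative}. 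I expect this boundary case to be the crux of the conjecture.
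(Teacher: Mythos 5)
This statement is Conjecture~\ref{neverBuchsbaum}, which the paper leaves open; there is no proof in the text to compare against, so I am evaluating your attempt on its own merits.

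Your reduction is reasonable in outline: dispose of the non-generically-isolated case via Theorem~\ref{surjectivityThm}, then attack internal degree~$0$ via the snake lemma applied to Construction~\ref{diagramConstruction} for the sequence~(\ref{S2}), arriving at $\Coker(\varphi_{R^2}^i)_0\cong\Coker\bigl(\delta_i\colon\Ext_A^{i+1}(\field,R^1)_{-1}\to\Ker\theta_1^{i+1,0}/K^{i+1}\bigr)$. That identification is correct. But two genuine gaps remain, and the second is, I believe, fatal to the strategy as stated.

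\textbf{The claim $\delta_i=0$ is asserted but not established.} You want the image of $\delta_i$ to lie in the image of $\varphi_R^{i+1}\bigl(\Ext_A^{i+1}(\field,R)_{-1}\bigr)=K^{i+1}$, but the ``naturality'' you invoke mixes the connecting homomorphisms of the $\theta_1$-sequence~(\ref{S1}) and the $\theta_2$-sequence~(\ref{S2}). These are two distinct boundary maps and do \emph{not} automatically commute: one needs a compatibility of the form $\partial_1\bigl(\varphi_{R^2}^i(b)\bigr)=\varphi_R^{i+1}(c)$ for a lift $b$ of $c$, which is a statement about the full Koszul bicomplex on $(\theta_1,\theta_2)$, not something that falls out of the prism in the proof of Theorem~\ref{surjectivityThm}. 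This step needs an actual argument.

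\textbf{The boundary case you flag is not a corner case --- it is the generic one, and it may make the conjecture false.} Your conclusion (granting $\delta_i=0$) produces a failure of surjectivity of $\varphi_{R^2}^i$ at the cohomological index $i$ where $\Ker\theta_1^{i+1,0}/K^{i+1}\neq0$. Since the only singular faces are vertices, the range where these modules can be nonzero is $i+1\le d-1$, i.e.\ $i\le d-2=\dim R^2$; Theorem~\ref{surjective} only sees $i<\dim R^2$. Now consider $\Delta$ the suspension of a triangulated torus $\Gamma$ (so $d=4$, $\dim R^2=2$). Here $H^j_\emptyset(\Delta)=0$ for $j\le1$, so homological isolation fails \emph{only} at $i=d-2=2$. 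Tracking through Section~\ref{calculations} and the isomorphisms in degrees $1$ and $2$ established in the proofs of Proposition~\ref{homIsoIsBuchs} and Theorem~\ref{surjectivityThm}, one finds $H_\mideal^0(R^2)=0$ and $H_\mideal^1(R^2)$ concentrated in internal degree~$2$, where $\varphi_{R^2}^1$ is an isomorphism --- so $\varphi_{R^2}^i$ is surjective for all $i<\dim R^2$, and Theorem~\ref{surjective} declares $R^2$ Buchsbaum, contrary to what the conjecture (and, for that matter, Corollary~\ref{SuspensionsAlmostBuchs}, which invokes Proposition~\ref{AlmostBuchsExamples} at $i=\dim R^2$, outside the range of Theorem~\ref{surjective}) would assert. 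So before investing in patches, you should verify this example carefully: it looks less like a ``remaining possibility'' and more like a counterexample to the conjecture. If the example holds up, the honest conclusion is that Conjecture~\ref{neverBuchsbaum} as stated should be amended, e.g.\ by requiring the failure of homological isolation to occur at some level $i<d-2$.
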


\section*{Acknowledgements}The author would like to thank Isabella Novik for proposing a problem that led to the results in this paper. The author is also grateful to Satoshi Murai for pointing out an error in an earlier version, leading to the notion of generic isolation of singularities.

\bibliographystyle{alpha}
\bibliography{singularities-biblio}

\end{document}